\def\R{\mathbb{R}}
\newcommand{\expn}{\operatorname{e}}
\newcommand{\diag}{\operatorname{diag}}
\newcommand{\orth}{\operatorname{orth}}
\newcommand{\vect}{\operatorname{vec}}
\newcommand{\beq}{\begin{equation}}
\newcommand{\eeq}{\end{equation}}
\newcommand {\mat}      [1] {\left[\begin{array}{#1}}
\newcommand {\rix}          {\end{array}\right]}
\newcommand {\smat}      [1] {\left[\begin{smallmatrix}{#1}}
\newcommand {\srix}          {\end{smallmatrix}\right]}
\newcommand {\s}      [1] {\begin{smallmatrix}{#1}}
\newcommand {\se}          {\end{smallmatrix}}
\newcommand{\trace}{\operatorname{tr}}
\newcommand{\hA}{\ensuremath{\hat{A}}}
\newtheorem{defn}{Definition}[section]
\newtheorem{remark}{Remark}
\newtheorem{assumption}{Assumption}
\newtheorem{lem}[defn]{Lemma}
\newtheorem{prop}[defn]{Proposition} 
\newtheorem{thm}[defn]{Theorem}
\newcommand{%
	\tikzsetnextfilename{figure/}%
	\input{figure/.tikz}%
}[1]{%
	\tikzsetnextfilename{figure/#1}%
	\input{figure/#1.tikz}%
}
\def\addlegendimage{\csname pgfplots@addlegendimage\endcsname}
\newlength\fheight
\newlength\fwidth
\title{Optimization based model order reduction for stochastic systems}
\author{Martin Redmann\thanks{Martin Luther University Halle-Wittenberg, Institute of Mathematics, Theodor-Lieser-Str. 5, 06120 Halle (Saale), Germany, Email: {\tt 
martin.redmann@mathematik.uni-halle.de}.}
\and Melina A. Freitag\thanks{Institut f\"{u}r Mathematik, Universit\"{a}t Potsdam, 
Campus Golm, Karl-Liebknecht-Str. 24-25, 14476 Potsdam, Germany, Email: {\tt melina.freitag@uni-potsdam.de}}
}
\begin{document}

\maketitle

\begin{abstract}
In this paper, we bring together the worlds of model order reduction for stochastic linear systems and $\mathcal H_2$-optimal model order reduction for deterministic systems. In particular, we supplement and complete the theory of error bounds for 
model order reduction of stochastic differential equations. With these error bounds, we establish a link between the output error for stochastic systems (with additive and multiplicative noise) and modified versions of the $\mathcal H_2$-norm for both linear and bilinear deterministic systems. When deriving the respective optimality conditions for minimizing the error bounds, we see that model order reduction techniques related to iterative rational Krylov algorithms (IRKA) are very natural and effective methods for reducing the dimension of large-scale stochastic systems with additive and/or multiplicative noise. We apply modified versions of (linear and bilinear) IRKA to stochastic linear systems and show their efficiency in numerical experiments. 

\end{abstract}
\textbf{Keywords:} model order reduction $\cdot$ stochastic systems $\cdot$ optimality conditions $\cdot$ Sylvester equations $\cdot$ L\'evy process

\noindent\textbf{MSC classification:}  93A15 $\cdot$ 93B40  $\cdot$ 65C30 $\cdot$ 93E03


\section{Introduction}

We consider the following linear stochastic systems \begin{subequations}\label{original_system}
\begin{align}\label{stochstate}
 dx(t) &= [Ax(t) +  B_1 u(t)]dt + f(x(t), dM(t)),\quad x(0) = x_0,\\ \label{stochout}
y(t) &= Cx(t),\quad t\geq 0,
\end{align}
            \end{subequations}
and the function $f$ represents  either additive or multiplicative noise, i.e., \begin{align*}
f(x(t), dM(t)) = \begin{cases}
  B_2 dM(t),  & \text{additive case }\\
  \sum_{i=1}^{m_2} N_i x(t-) dM_i(t), & \text{multiplicative case},
\end{cases}
\end{align*}
where $x(t-):=\lim_{s\uparrow t} x(s)$. Above, we assume that $A, N_i \in \R^{n\times n}$, $B_1 \in \R^{n\times m_1}$, $B_2 \in \R^{n\times m_2}$, and $C \in \R^{p\times n}$ are constant matrices. 
The vectors $x(t)\in\R^n$, $u(t)\in\R^{m_1}$ and $y(t)\in\R^p$ are called state, control input and output vector, respectively. Moreover, let 
$M=\left(M_1, \ldots, M_{m_2}\right)^T$ be an $\mathbb R^{m_2}$-valued square integrable L\'evy process with mean zero and covariance matrix $K=(k_{ij})_{i, j = 1, \ldots, m_2}$, i.e.,
$\mathbb E[M(t)M^T(t)]= K t$ for $t\geq 0$. Such a matrix exists, see, e.g., \cite{zabczyk}.

$M$ and all stochastic process appearing in this paper are defined on a filtered probability space
$\left(\Omega, \mathcal F, (\mathcal F_t)_{t\geq 0}, \mathbb P\right)$\footnote{$(\mathcal F_t)_{t\geq 0}$ is right continuous and complete.}. In addition, $M$ is $(\mathcal F_t)_{t\geq 0}$-adapted and its 
increments $M(t+h)-M(t)$ are independent of $\mathcal F_t$ for $t, h\geq 0$. Throughout this paper, we assume that $u$ is an $(\mathcal F_t)_{t\geq 0}$-adapted control that is square integrable meaning that 
\begin{align*}
\left\| u\right\|^2_{L^2_T} := \mathbb E\int_0^T \left\| u(s)\right\|^2 ds < \infty
\end{align*}
for all $T>0$.

In recent years, model order reduction (MOR) techniques such as balanced truncation (BT) and singular perturbation approximation (SPA), methods 
well-known and well-understood for deterministic systems \cite{antoulas,spa,moore} have been extended to stochastic systems of the form \eqref{original_system}, see, for example \cite{bennerdamm,redmannbenner,redSPA,redmannfreitag}.
In this paper we discuss optimization based model order reduction techniques for stochastic systems, which will lead naturally to iterative rational Krylov algorithms (IRKA), methods well-known for deterministic systems. 

IRKA was introduced in \cite{linIRKA} (for systems \eqref{original_system} with $f\equiv 0$) and relies on finding a suitable bound ($\mathcal H_2$-error) for the output error of two systems with the same structure but, as
in the context of MOR, one is usually large-scale and the other one is of small order. Subsequently, first order optimality conditions for this $\mathcal H_2$-bound with respect to the reduced order model (ROM) coefficients
were derived. 
These optimality conditions can be based on system Gramians \cite{hyland, Wilson} or they can be equivalently formulated as interpolatory conditions \cite{linIRKA,meierlun} associated to transfer functions of the systems. It
was shown in \cite{linIRKA} that IRKA fits these conditions. This $\mathcal H_2$-optimal scheme was extended in the linear deterministic setting to  minimizing systems errors in weighted norms \cite{linWeightedH2,NearoptfreqMOR,halevi}.

An extension of IRKA to bilinear systems, which relies on the Gramian based optimality conditions shown in \cite{morZhaL02} was given in \cite{breiten_benner}. The respective interpolatory optimality conditions in the bilinear 
case were proved in \cite{flagggug}. However, in contrast to the linear case, bilinear IRKA in \cite{breiten_benner} was developed without knowing about the link between the bilinear $\mathcal H_2$-distance and the output error of 
two bilinear systems. Later this gap was closed in \cite{redmann_h2_bil} showing that the bilinear $\mathcal H_2$-error bounds the output error if one involves the exponential of the control energy.

In order to establish IRKA for stochastic systems \eqref{original_system} as an alternative to balancing related MOR, we develop a theory as follows. We prove an output error between two stochastic systems with multiplicative
noise and derive the respective first order conditions for optimality in Section \ref{sec:mult}. The bound in the stochastic case \eqref{eq:boundstoch} covers the $\mathcal H_2$-error of two bilinear system as special cases,
the same is true for the optimality conditions which generalize the ones in \cite{morZhaL02}. However, in the stochastic case, in contrast to the bilinear case, the bound does not include the additional factor of the exponential
of the control energy. Hence the bound is expected to be much tighter for stochastic system. Based on the optimality conditions for \eqref{eq:boundstoch} we propose a modified version of bilinear IRKA. Based on the structure of
the bound in \eqref{eq:boundstoch} modified bilinear IRKA appears to be an even more natural method to reduce stochastic systems rather than bilinear systems. 

For the case of additive noise, which we consider in Section \ref{sec:add}, the first order optimality conditions are merely a special case of the ones for multiplicative noise. As an additional feature we introduce a splitting
approach for systems with additive noise in this section, where we split the linear system into two subsystems; one which includes the deterministic part and one the additive noise. We reduce each subsystem independently, which
allows for additional flexibility, in case that one of the systems is easier to reduce. Moreover, we consider a one step approach which reduced the deterministic and the noisy part simultaneously. Again, error bounds are provided
which naturally lead to (weighted) versions of linear IRKA for each subsystem for the reduction process. 

The final Section \ref{sec:numerics} contains numerical experiments for systems with both multiplicative and additive noise in order to support our theoretical results.

\section{Systems with multiplicative noise}
\label{sec:mult}

We study the multiplicative case first, in which \eqref{original_system} becomes 
\begin{subequations}\label{original_system_mul}
\begin{align}\label{stochstate_mul}
 dx(t) &= [Ax(t) +  B_1 u(t)]dt + \sum_{i=1}^{m_2} N_i x(t-) dM_i(t),\quad x(0) = x_0,\\ \label{stochout_mul}
y(t) &= Cx(t),\quad t\geq 0.
\end{align}
\end{subequations}
Now, the goal is to find a measure for the distance between \eqref{original_system_mul} and 
a second system having the same structure but potentially a much smaller dimension. It is given by 
\begin{subequations}\label{red_system_mul}
\begin{align}\label{red_stochstate_mul}
 d\hat x(t) &= [\hat A\hat x(t) +  {\hat B}_1 u(t)]dt + \sum_{i=1}^{m_2} {\hat N}_i \hat x(t-) dM_i(t),\quad \hat x(0) = {\hat x}_0,\\ \label{red_stochout_mul}
\hat y(t) &= \hat C\hat x(t),\quad t\geq 0,
\end{align}
\end{subequations}
where $\hat x(t)\in\mathbb R^r$, with $r\ll n$ and $\hat A$, ${\hat B}_1$, $\hat C$, ${\hat N}_i$, $i=1,\ldots,m_2$ of appropriate dimension. 
In order to find a distance between the above systems, a stability assumption and the fundamental solution to both systems are needed. 
Some results (in particular the one on the optimality for the error between two systems) obtained for systems with multiplicative noise can be transferred to the case with additive noise as we will see later. 

\subsection{Fundamental solutions and stability}

We introduce the fundamental solution $\Phi$ to \eqref{stochstate_mul}. It is defined as the $\R^{n\times n}$-valued solution to 
\begin{align}\label{funddef}
 \Phi(t, s)=I+\int_s^t A \Phi(\tau, s) d\tau+\sum_{i=1}^{m_2} \int_s^t N_i \Phi(\tau-, s)dM_i(\tau), \quad t\geq s.
\end{align}
This operator maps the initial condition $x_0$ to the solution of the homogeneous state equation with initial time $s\geq 0$. We additionally define $\Phi(t) := \Phi(t, 0)$. 
Note, that $\Phi$ also includes the fundamental solution of the additive noise scenario which we will use later. It is obtained by setting 
$N_1=\ldots=N_{m_2} = 0$ in \eqref{funddef}, so that $\Phi(t, s) = \expn^{A(t-s)}$. We make a stability assumption on the fundamental solution which we need to produce well-defined system norms (and distances). 

\begin{assumption}\label{assumption}
The fundamental solution $\Phi$ is mean square asymptotically stable, i.e., there is a constant $c>0$ such that 
\begin{align}\label{assumptionstab}
  \mathbb E\left\|\Phi(t)\right\|^2 \lesssim \expn^{-c t} \Leftrightarrow \lambda\left(I\otimes A + A\otimes I+ \sum_{i, j=1}^{m_2} N_i \otimes N_j k_{ij}\right)\subset \mathbb C_-,
\end{align}
where $\lambda(\cdot)$ denotes the spectrum of a matrix. We refer to \cite{redmannspa2} for the equivalence in \eqref{assumptionstab}, or to \cite{staboriginal} for the same result in case of standard Wiener noise.
\end{assumption}
Note that, with additive noise ($N_i=0,\forall i$ in \eqref{funddef}), condition \eqref{assumptionstab} simplifies to 
\begin{align}\label{stab_cond_add}
 \left\|\expn^{At}\right\|^2 \lesssim \expn^{-c t} \Leftrightarrow \lambda\left(A\right)\subset \mathbb C_-.
\end{align}
The fundamental solution is a vital tool to compute error bounds between two different stochastic systems. The key result to establish
these bounds is the following lemma which is a generalization of \cite[Proposition 4.4]{redmannbenner}.
\begin{lem}\label{lemdgl}
Let $\Phi$ be the fundamental solution of the stochastic differential equation with coefficients $A, N_i \in \R^{n\times n}$ defined in \eqref{funddef} and let $\hat \Phi$ be the one of the same system, where $A, N_i$ are 
replaced by $\hat A, \hat N_i \in \R^{r\times r}$. Moreover, suppose that $L$ and $\hat L$ are matrices of suitable dimension. Then,  
the $\mathbb R^{n\times r}$-valued function $\mathbb E \left[\Phi(t, s) L {\hat L}^T \hat{\Phi}^T(t, s)\right]$, $t\geq s$, satisfies \begin{align}\label{dglmixed}
\dot{X}(t)=X(t) {\hat A}^T+A X(t)+\sum_{i, j = 1}^{m_2} N_i X(t) {\hat N}_j^T\;k_{ij},\quad X(s)= L {\hat L}^T .
\end{align}
\end{lem}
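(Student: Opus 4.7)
The plan is to define $Y(t) := \Phi(t,s) L \hat L^T \hat\Phi^T(t,s)$ and derive a stochastic differential equation for $Y(t)$ via the Itô product rule for semimartingales; then taking expectations, the martingale parts vanish and only the desired drift remains. The initial condition $X(s)=L\hat L^T$ follows immediately from $\Phi(s,s)=I$ and $\hat\Phi(s,s)=I$.

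More concretely, from the defining equation \eqref{funddef} I first read off the differentials
\begin{align*}
d\Phi(t,s) &= A\,\Phi(t-,s)\,dt + \sum_{i=1}^{m_2} N_i\,\Phi(t-,s)\,dM_i(t),\\
d\hat\Phi^T(t,s) &= \hat\Phi^T(t-,s)\,\hat A^T dt + \sum_{j=1}^{m_2}\hat\Phi^T(t-,s)\,\hat N_j^T\,dM_j(t).
\end{align*}
Applying the matrix-valued Itô product rule to $Y(t)=\Phi L\hat L^T \hat\Phi^T$ (which is simply the scalar product rule applied entrywise and reassembled), I obtain
\begin{align*}
dY(t) \;=\; (d\Phi)L\hat L^T\hat\Phi^T(t-,s) + \Phi(t-,s)L\hat L^T (d\hat\Phi^T) + d\bigl[\Phi,\; L\hat L^T\hat\Phi^T\bigr](t),
\end{align*}
where the matrix-bracket term $[\Phi,L\hat L^T\hat\Phi^T]$ is, entrywise, the sum of the scalar covariations of the corresponding matrix factors.

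Inserting the drift--martingale decompositions and using bilinearity of the bracket together with the identity $d[M_i,M_j]_t = k_{ij}\,dt + dZ_{ij}(t)$ for a mean-zero martingale $Z_{ij}$ (which is the statement that $[M_i,M_j]_t - k_{ij}t$ is a martingale, an immediate consequence of $\mathbb E[M(t)M^T(t)]=Kt$ and the independent-increments property), I arrive at
\begin{align*}
dY(t) &= A\,Y(t-)\,dt + Y(t-)\,\hat A^T dt + \sum_{i,j=1}^{m_2} N_i Y(t-)\hat N_j^T\,k_{ij}\,dt + dN(t),
\end{align*}
where $N$ collects all pure-martingale contributions (the $dM_i$ and $dZ_{ij}$ integrals). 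Taking expectations, the martingale part vanishes (the square-integrability assumption on $M$ together with the exponential bound on $\mathbb E\|\Phi\|^2$ from Assumption \ref{assumption} ensures that the integrands are in the appropriate $L^2$ space; if necessary one localizes by a stopping time first and passes to the limit). Differentiating the resulting integral equation in $t$ produces exactly \eqref{dglmixed}.

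The main obstacle is bookkeeping the matrix-valued quadratic covariation correctly; in particular, one must verify that the Lévy-process covariance $K$ enters through the predictable compensator of $[M_i,M_j]$ in the form $k_{ij}\,dt$, so that all quadratic-variation contributions collapse into the sum $\sum_{i,j} N_i X\hat N_j^T k_{ij}$. Everything else is, modulo the usual localization argument to justify exchanging expectation and stochastic integral, a mechanical application of the semimartingale product rule, so the result can be viewed as the natural generalization of \cite[Proposition 4.4]{redmannbenner} from a single to a mixed pair of fundamental solutions.
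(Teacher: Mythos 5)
Your argument is correct and follows essentially the same route as the paper: apply the semimartingale It\^o product rule to $\Phi(t,s)L\hat L^T\hat\Phi^T(t,s)$, take expectations so that the stochastic-integral terms vanish, and identify the quadratic-covariation contribution with $\sum_{i,j}N_i X(t)\hat N_j^T k_{ij}$. The only (harmless) difference is in how that bracket term is evaluated: you invoke the compensator identity that $[M_i,M_j]_t-k_{ij}t$ is a martingale, whereas the paper applies the product rule a second time to the martingale parts $\mathcal M_k\hat{\mathcal M}_k^T$ and then uses the It\^o isometry, and it also first decomposes $L\hat L^T$ into rank-one column products so as to use the vector-valued product rule --- both purely technical variations.
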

\begin{proof}
See Appendix \ref{proof_prop_fund}.
\end{proof}
Lemma \ref{lemdgl} yields \begin{align}\label{equamixedew}
\mathbb E \left[\Phi(t, s)  L {\hat L}^T \hat{\Phi}^T(t, s)\right]=\mathbb E \left[\Phi(t-s)  L {\hat L}^T \hat{\Phi}^T(t-s)\right]
\end{align}
for all $t\geq s\geq 0$, since both expressions solve \eqref{dglmixed}.

\subsection{The stochastic analogue to $\mathcal H_2$-norms}\label{stochH2}

For deterministic linear systems (system \eqref{original_system} without noise), a transfer function $G$ can be interpreted as an input-output map in the frequency domain, i.e., $\tilde y = G \tilde u$, where 
$\tilde u$ and $\tilde y$ are the Laplace transforms of the input and the  output, receptively. A norm associated with this transfer function can subsequently be defined that provides 
a bound for the norm of the output. This can, e.g., be the $\mathcal H_2$-norm of $G$ which in the linear deterministic case is given by 
\begin{align*}
\left\| G\right\|^2_{\mathcal H_2} := \frac{1}{2\pi} \int_{-\infty}^\infty \left\| G(\mathrm i w) \right\|^2_F dw,                                                                                                               
\end{align*}
where $\left\| \cdot\right\|_F$ denotes the Frobenius norm and $\mathrm i$ the imaginary unit.
However, there are no transfer functions in the stochastic case but we can still define a norm that is analogue to the $\mathcal H_2$-norm. To do so, 
we use a connection known in the linear deterministic case. There, the $\mathcal H_2$-norm of a transfer function coincides with the $L^2$-norm of the impulse response of the system, that is,
\[
\left\| G\right\|^2_{\mathcal H_2} = \int_0^{\infty} \left\| C e^{A s} B_1 \right\|^2_F ds,
\]
see \cite{antoulas}. A generalized impulse response exists in the stochastic setting \eqref{original_system_mul} and is given by $H(t):= C \Phi(t) B_1$, where $\Phi$ is the fundamental solution 
defined in \eqref{funddef}. We introduce a space $\mathcal L^2(\mathcal W)$ of matrix-valued and $(\mathcal F_t)_{t\geq 0}$-adapted stochastic processes $Y$ of appropriate dimension with 
\begin{align*}
\left\| Y\right\|^2_{\mathcal L^2(\mathcal W)} :=  \mathbb E\int_0^\infty \left\| Y(s) \mathcal W\right\|^2_F ds <\infty ,
\end{align*}
and $\mathcal W$ is a regular $m_1\times m_1$ matrix that can be seen as a weight (in the simplest case the identity matrix) and will be specified later. Now, the stochastic analogue to the (weighted) $\mathcal H_2$-norm 
for system \eqref{original_system_mul} is 
\begin{align*}
 \left\| H\right\|_{\mathcal L^2(\mathcal W)} =  \left(\mathbb E\int_0^\infty \left\| C \Phi(s) B_1 \mathcal W\right\|^2_F ds\right)^{\frac{1}{2}},
\end{align*}
which is finite due to Assumption \ref{assumption}. Let $\hat\Phi$ denote the fundamental solution to the reduced system \eqref{red_system_mul} and $\hat H(t):= \hat C \hat \Phi(t) \hat B_1$ its 
impulse response. Then, we can specify the distance between $H$ and $\hat H$. A Gramian based representation is stated in the next theorem.
\begin{thm}\label{thm_stoch_h2_rep}
Let $H$ and $\hat H$ be the impulse responses of systems \eqref{original_system_mul} and \eqref{red_system_mul}, respectively. Moreover, suppose that 
Assumption \ref{assumption} holds for \eqref{original_system_mul} and \eqref{red_system_mul}. Then, we have \begin{align*}
 \left\| H - \hat H\right\|_{\mathcal L^2(\mathcal W)}^2 =  \left\| C \Phi B_1 - \hat C \hat \Phi \hat B_1\right\|_{\mathcal L^2(\mathcal W)}^2 = \trace(C P C^T) +   \trace(\hat C \hat P {\hat C}^T) - 2 \trace(C P_2 {\hat C}^T),                                               
                                                       \end{align*}
where the matrices $P, \hat P$ and $P_2$ are the solutions to \begin{align}\label{full_gram}
A P + P A^T+ \sum_{i, j=1}^{m_2} N_i P N_{j}^T k_{ij} &= -B_1 \mathcal W (B_1 \mathcal W)^T,\\ \label{red_gram}
\hat A\hat P + \hat P {\hat A}^T+\sum_{i, j=1}^{m_2} \hat N_{i} \hat P {\hat N}_{j}^T k_{ij} &= -\hat B_1 \mathcal W (\hat B_1 \mathcal W)^T\\ \label{mixed_gram}
A P_2 + P_2 {\hat A}^T+\sum_{i, j=1}^{m_2} N_{i} P_2 {\hat N}_{j}^T k_{ij} &= -B_1 \mathcal W (\hat B_1 \mathcal W)^T.
    \end{align}
\end{thm}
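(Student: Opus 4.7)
The plan is to expand the squared error norm via the Frobenius identity $\|X-Y\|_F^2=\operatorname{tr}(XX^T)+\operatorname{tr}(YY^T)-2\operatorname{tr}(XY^T)$, represent each of the three resulting integrals as a Gramian, and then use Lemma~\ref{lemdgl} combined with the stability assumption to identify those Gramians as the solutions of \eqref{full_gram}--\eqref{mixed_gram}.

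More concretely, I would first write
\begin{align*}
\|H-\hat H\|_{\mathcal L^2(\mathcal W)}^2
&= \mathbb E\int_0^\infty\operatorname{tr}\bigl(C\Phi(s)B_1\mathcal W\mathcal W^T B_1^T\Phi^T(s)C^T\bigr)\,ds\\
&\quad+ \mathbb E\int_0^\infty\operatorname{tr}\bigl(\hat C\hat\Phi(s)\hat B_1\mathcal W\mathcal W^T\hat B_1^T\hat\Phi^T(s)\hat C^T\bigr)\,ds\\
&\quad- 2\,\mathbb E\int_0^\infty\operatorname{tr}\bigl(C\Phi(s)B_1\mathcal W\mathcal W^T\hat B_1^T\hat\Phi^T(s)\hat C^T\bigr)\,ds,
\end{align*}
then use Fubini (the integrands are nonnegative or, for the cross term, dominated by a product handled below) together with linearity of trace to pull $C,\hat C$ outside and identify
\[
P := \int_0^\infty\!\mathbb E\bigl[\Phi(s)B_1\mathcal W\mathcal W^T B_1^T\Phi^T(s)\bigr]\,ds,\qquad
\hat P,\;P_2\text{ analogously}.
\]
This reduces the claim to showing that $P,\hat P,P_2$ satisfy \eqref{full_gram}--\eqref{mixed_gram}.

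For each of these three matrices, Lemma~\ref{lemdgl} tells me that the integrand $X(t):=\mathbb E[\Phi(t)L\hat L^T\hat\Phi^T(t)]$ (with the appropriate choice of $L,\hat L$ and of the ``second'' system — which in the first two cases coincides with the original one) satisfies \eqref{dglmixed} with $s=0$ and initial value $L\hat L^T$. Integrating \eqref{dglmixed} from $0$ to $T$ and letting $T\to\infty$ converts the ODE into the corresponding algebraic Lyapunov equation, provided $X(T)\to 0$. For $P$ and $\hat P$ this is immediate from Assumption~\ref{assumption}. For the mixed case $P_2$, the decay $X(T)\to 0$ follows from Cauchy--Schwarz applied inside the expectation,
\[
\bigl\|\mathbb E[\Phi(T)B_1\mathcal W\mathcal W^T\hat B_1^T\hat\Phi^T(T)]\bigr\|_F
\le c\bigl(\mathbb E\|\Phi(T)\|^2\bigr)^{1/2}\bigl(\mathbb E\|\hat\Phi(T)\|^2\bigr)^{1/2}
\lesssim \mathrm e^{-cT},
\]
using the stability bound in Assumption~\ref{assumption} applied to both the full and the reduced fundamental solution. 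Identifying the right-hand sides then yields exactly \eqref{full_gram}--\eqref{mixed_gram}.

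The routine parts are the Frobenius expansion and matching coefficients after integrating the ODE. The main technical obstacle, though small, is the cross-Gramian case: I need the Cauchy--Schwarz argument above both to justify the Fubini interchange for the non-positive integrand and to obtain $X(T)\to 0$, since Assumption~\ref{assumption} is phrased individually for $\Phi$ and $\hat\Phi$ but not directly for the mixed expectation. Once this decay is in hand, integrating \eqref{dglmixed} and passing to the limit gives \eqref{mixed_gram}, completing the representation.
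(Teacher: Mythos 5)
Your proposal is correct and follows essentially the same route as the paper: expand the squared $\mathcal L^2(\mathcal W)$-norm via the trace, define $P$, $\hat P$, $P_2$ as integrated expectations, invoke Lemma~\ref{lemdgl}, integrate the matrix ODE over $[0,v]$ and let $v\to\infty$. Your explicit Cauchy--Schwarz argument for the decay of the mixed term $\mathbb E[\Phi(v)B_1\mathcal W\mathcal W^T\hat B_1^T\hat\Phi^T(v)]$ is a welcome refinement of a step the paper handles with the brief remark that $\lim_{v\to\infty}X(v)=0$ ``by assumption.''
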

\begin{proof}
We have that
  \begin{align*}
 \left\| H - \hat H\right\|_{\mathcal L^2(\mathcal W)}^2 =\;& \mathbb E\int_0^\infty \left\| (C \Phi(s) B_1 - \hat C \hat \Phi(s) \hat B_1) \mathcal W\right\|^2_F ds\\
 =\;&\mathbb E\int_0^\infty \trace\left(C \Phi(s) (B_1\mathcal W) (B_1\mathcal W)^T\Phi^T(s) C^T\right) ds\\
 &+ \mathbb E\int_0^\infty \trace\left(\hat C \hat \Phi(s) (\hat B_1\mathcal W) (\hat B_1\mathcal W)^T\hat\Phi^T(s) \hat C^T\right) ds\\
 &- 2 \mathbb E\int_0^\infty \trace\left(C \Phi(s) (B_1\mathcal W) (\hat B_1\mathcal W)^T\hat\Phi^T(s) \hat C^T\right) ds
                                                       \end{align*}
using the properties of the Frobenius norm. Due to the linearity of the trace and the integral, we find    
\begin{align*}
 \left\| H - \hat H\right\|_{\mathcal L^2(\mathcal W)}^2 = \trace(C P C^T) +   \trace(\hat C \hat P {\hat C}^T) - 2 \trace(C P_2 {\hat C}^T),                                               
                                                       \end{align*}
where we set \begin{align*}
 P:= &\int_0^\infty \mathbb E\left[\Phi(s) (B_1\mathcal W) (B_1\mathcal W)^T\Phi^T(s)\right] ds, \quad\hat P:= \int_0^\infty \mathbb E\left[\hat \Phi(s) (\hat B_1\mathcal W) (\hat B_1\mathcal W)^T\hat \Phi^T(s)\right] ds,\\
 &\quad\quad \quad\quad\quad \quad\quad P_2:= \int_0^\infty \mathbb E\left[\Phi(s) (B_1\mathcal W) (\hat B_1\mathcal W)^T\hat \Phi^T(s)\right] ds.
             \end{align*} 
Notice that $P, \hat P$ and $P_2$ exist due to Assumption \ref{assumption} for both systems. Now, the function $t\mapsto X(t):=\mathbb E\left[\Phi(t) (B_1\mathcal W) (\hat B_1\mathcal W)^T\hat \Phi^T(t)\right]$      
solves \eqref{dglmixed} in Lemma \ref{lemdgl} with $s=0$ and $\hat L = \hat B_1\mathcal W$,  $L = B_1\mathcal W$. Integrating \eqref{dglmixed} over $[0, v]$ yields
\begin{align*}
X(v)- (B_1\mathcal W) (\hat B_1\mathcal W)^T =\int_0^v X(t) dt {\hat A}^T+A \int_0^v X(t) dt +\sum_{i, j = 1}^{m_2} N_i \int_0^v X(t) dt  {\hat N}_j^T\;k_{ij}.
\end{align*}
Taking the limit of $v\rightarrow \infty$ above and taking into account that $\lim_{v\rightarrow \infty} X(v) = 0$ by assumption, we obtain that $P_2$ solves \eqref{mixed_gram}. With the same arguments setting 
$\Phi = \hat \Phi$ and $B_1 = \hat B_1$ and vice versa, we see that $P$ and $\hat P$ satisfy \eqref{full_gram} and \eqref{red_gram}, respectively.
\end{proof}
If $m_1 = m_2$ and if we replace the noise in \eqref{stochstate_mul} and \eqref{red_stochstate_mul} by the components $u_i$ of the control vector $u$, i.e., $dM_i(t)$ is replaced by $u_i(t) dt$, then these systems are so-called
(deterministic) bilinear systems. Let us denote these resulting bilinear systems by $\Sigma_{bil}$ and $\hat \Sigma_{bil}$ for the full and reduced system, respectively. Then, an $\mathcal H_2$-norm can be introduced for the bilinear case, too. We refer to 
\cite{morZhaL02} for more details. In \cite{morZhaL02} a Gramian based representation for $\left\|\Sigma_{bil} - \hat \Sigma_{bil} \right\|_{\mathcal H_2}$ is given. Interestingly, this distance coincides with 
the one in Theorem \ref{thm_stoch_h2_rep} if the noise and the input dimension coincide ($m_1 = m_2$), and the covariance and weight matrices are the identity ($K=(k_{ij}) = I$ and $\mathcal W = I$).
Consequently, a special case of $\left\| H - \hat H\right\|_{\mathcal L^2(\mathcal W)}$ yields a stochastic time-domain representation for the metric induced by the $\mathcal H_2$-norm for bilinear systems,
e.g., if $M$ is an $m_1$-dimensional standard Wiener process. We refer to \cite{redmann_h2_bil} for further connections between bilinear and stochastic linear systems. The next proposition deals with 
the distance between the outputs $y$ and $\hat y$, defined in \eqref{stochout_mul} and \eqref{red_stochout_mul}, and the above stochastic $\mathcal H_2$-distance $\left\| H - \hat H\right\|_{\mathcal L^2(\mathcal W)}$ when 
$\mathcal W = I$.
\begin{prop}\label{rel_output_error}
Let $y$ and $\hat y$ be the outputs of systems \eqref{original_system_mul} and \eqref{red_system_mul} with $x_0=0$, $\hat x_0=0$ 
and let $H= C \Phi(\cdot) B_1$, $\hat H= \hat C \hat \Phi(\cdot) \hat B_1$ the impulse responses of these 
systems. Then, for $T>0$, we have
\begin{align}
  \sup_{t \in [0, T]} \mathbb E \left\|y(t) - \hat y(t) \right\| \leq   \left\| H - \hat H\right\|_{\mathcal L^2(I)} \left\| u\right\|_{L^2_T}. \label{eq:boundstoch}      
\end{align}
\end{prop}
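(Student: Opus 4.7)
The plan is to combine the variation-of-constants representation of the state with a double application of Cauchy--Schwarz and the time-shift identity \eqref{equamixedew} that follows from Lemma \ref{lemdgl}.

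First, since both initial states vanish and the control enters only through the drift, the variation-of-constants formula gives
\begin{align*}
y(t)-\hat y(t)=\int_0^t\bigl[C\Phi(t,s)B_1-\hat C\hat\Phi(t,s)\hat B_1\bigr]u(s)\,ds,
\end{align*}
where $\Phi$ and $\hat\Phi$ are the fundamental solutions of \eqref{stochstate_mul} and \eqref{red_stochstate_mul}. Applying the triangle inequality, Fubini (to exchange $\mathbb E$ and the $ds$-integral), and the submultiplicative bound $\|Mv\|\le\|M\|_F\|v\|$ yields
\begin{align*}
\mathbb E\|y(t)-\hat y(t)\|\le \int_0^t \mathbb E\bigl[\|C\Phi(t,s)B_1-\hat C\hat\Phi(t,s)\hat B_1\|_F\,\|u(s)\|\bigr]ds.
\end{align*}

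Next I would apply Cauchy--Schwarz twice. Pointwise in $s$, the probability-space Cauchy--Schwarz gives
\begin{align*}
\mathbb E\bigl[\|C\Phi(t,s)B_1-\hat C\hat\Phi(t,s)\hat B_1\|_F\,\|u(s)\|\bigr]\le \bigl(\mathbb E\|C\Phi(t,s)B_1-\hat C\hat\Phi(t,s)\hat B_1\|_F^2\bigr)^{1/2}\bigl(\mathbb E\|u(s)\|^2\bigr)^{1/2},
\end{align*}
and then Cauchy--Schwarz on $[0,t]$ decouples the two $L^2$-factors. The second factor is bounded by $\|u\|_{L^2_T}$ for all $t\in[0,T]$, so it only remains to bound
\begin{align*}
\int_0^t \mathbb E\|C\Phi(t,s)B_1-\hat C\hat\Phi(t,s)\hat B_1\|_F^2\,ds.
\end{align*}

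The crucial step, and the part I expect to require the most care, is converting this two-time quantity into the stationary impulse-response norm $\|H-\hat H\|_{\mathcal L^2(I)}^2$. Expanding the Frobenius norm as a trace gives three terms of the form $\operatorname{tr}(C\mathbb E[\Phi(t,s)LL'^T\hat\Phi^T(t,s)]\hat C^T)$ with suitable choices $L,L'\in\{B_1,\hat B_1\}$ (or $\Phi=\hat\Phi$ etc.). Each such term falls exactly under Lemma \ref{lemdgl}, and \eqref{equamixedew} lets me replace $\Phi(t,s),\hat\Phi(t,s)$ by $\Phi(t-s),\hat\Phi(t-s)$. Recombining the three traces,
\begin{align*}
\mathbb E\|C\Phi(t,s)B_1-\hat C\hat\Phi(t,s)\hat B_1\|_F^2=\mathbb E\|C\Phi(t-s)B_1-\hat C\hat\Phi(t-s)\hat B_1\|_F^2 = \mathbb E\|(H-\hat H)(t-s)\|_F^2.
\end{align*}
Substituting $\tau=t-s$ and extending the integration domain to $[0,\infty)$ (the integrand is nonnegative and integrable by Assumption \ref{assumption}) identifies the bound with $\|H-\hat H\|_{\mathcal L^2(I)}^2$. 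Taking square roots and the supremum over $t\in[0,T]$ produces \eqref{eq:boundstoch}.
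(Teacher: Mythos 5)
Your proposal is correct and follows essentially the same route as the paper: the variation-of-constants representation with zero initial states, Cauchy--Schwarz to split off $\left\|u\right\|_{L^2_T}$, the time-shift identity \eqref{equamixedew} from Lemma \ref{lemdgl} applied to the three trace terms of the expanded Frobenius norm, and finally the substitution $\tau=t-s$ with extension of the integral to $[0,\infty)$. The only cosmetic difference is that you apply Cauchy--Schwarz twice (first on $\Omega$, then on $[0,t]$) where the paper applies it once on the product measure $\mathbb E\int_0^t$; both yield the identical bound.
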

\begin{proof}
We find a solution representation for \eqref{stochstate_mul} by \begin{align*}
x(t) = \Phi(t) x_0 + \int_0^t \Phi(t, s) B_1 u(s) ds,
\end{align*}
where $\Phi(t, s) = \Phi(t)\Phi^{-1}(s)$ satisfies \eqref{funddef}. This is obtained by applying Ito's formula in \eqref{profriot} to $\Phi(t) g(t)$ with $g(t) := x_0 + \int_0^t \Phi^{-1}(s) B_1 u(s) ds$ using that 
$\Phi$ and $g$ are semimartingales, see Appendix \ref{appendixito}. Since $g$ is continuous and has a martingale part zero, \eqref{profriot} simply becomes the standard product rule due to \eqref{decomqucov} and one can 
show that $\Phi(t) g(t)$ is the solution to \eqref{stochstate_mul}. We proceed with the arguments used in \cite{redmannbenner}. The representations for the reduced and the full state with zero initial states yield 
\begin{align*}
 \mathbb E \left\|y(t) - \hat y(t) \right\| &\leq \mathbb E \int_0^t \left\|\left(C\Phi(t, s) B_1 - \hat C\hat \Phi(t, s) \hat B_1\right)u(s) \right\| ds\\  
 &\leq \mathbb E \int_0^t \left\|C\Phi(t, s) B_1 - \hat C\hat \Phi(t, s) \hat B_1\right\|_F \left\|u(s) \right\| ds.
\end{align*}
We apply the Cauchy-Schwarz inequality and obtain 
\begin{align*}
\mathbb E \left\|y(t) - \hat y(t) \right\| \leq \left(\mathbb E \int_0^t \left\|C\Phi(t, s) B_1 - \hat C\hat \Phi(t, s) \hat B_1\right\|_F^2 ds\right)^{\frac{1}{2}} \left\| u\right\|_{L^2_t}.
\end{align*}
Now, $\mathbb E[\Phi(t, s) B_1 \hat B_1 \hat \Phi(t, s)]  = \mathbb E[\Phi(t - s) B_1 \hat B_1 \hat \Phi(t - s)]$ due to \eqref{equamixedew}. Since the same property holds when considering 
$B_1 = \hat B_1$ and $\Phi = \hat \Phi$, we find  \begin{align*}
 \mathbb E \left\|y(t) - \hat y(t) \right\| &\leq \left(\mathbb E \int_0^t \left\|C\Phi(t - s) B_1 - \hat C\hat \Phi(t - s) \hat B_1\right\|_F^2 ds\right)^{\frac{1}{2}} \left\| u\right\|_{L^2_t}\\
 &= \left(\mathbb E \int_0^t \left\|C\Phi(s) B_1 - \hat C\hat \Phi(s) \hat B_1\right\|_F^2 ds\right)^{\frac{1}{2}} \left\| u\right\|_{L^2_t}.
\end{align*}
Taking the supremum on both sides and the upper bound of the integral to infinity implies the result.
\end{proof}
If  $\mathbb E\int_0^\infty \left\| u(s)\right\|^2 ds < \infty$, then we can replace $[0, T]$ by $[0, \infty)$ in Proposition \ref{rel_output_error}. The above result shows that one can expect a good approximation 
of \eqref{original_system_mul} by \eqref{red_system_mul} if matrices $\hat A, \hat N_i, \hat B_1, \hat C$ are chosen such that $\left\| H - \hat H\right\|_{\mathcal L^2(I)}$ is minimal. As mentioned above, 
$\left\| H - \hat H\right\|_{\mathcal L^2(I)}$ coincides with the $\mathcal H_2$-distance of the corresponding bilinear systems in special cases. In the bilinear case, MOR techniques have been considered that minimize 
the $\mathcal H_2$-error of two systems, e.g., bilinear IRKA \cite{breiten_benner}. Below, we construct related (${\mathcal L^2(\mathcal W)}$-optimal) algorithms which are very natural for stochastic systems due
to Proposition \ref{rel_output_error}, where $\mathcal W = I$. Note that for bilinear systems, the above proposition can only be shown if the right hand side is additionally multiplied by $\exp\left\{0.5 \left\| u\right\|^2_{L^2_T}\right\}$. 
This is a very recent result proved in \cite{redmann_h2_bil}. Therefore, considering IRKA type methods for stochastic systems seems even more natural than in the bilinear case in terms of the expected output error.
\begin{remark}
The error bound in Proposition \ref{rel_output_error} was shown for the special case of uncorrelated noise processes $M_i$ in \cite{redmannbenner} and used to determine an error bound for balanced truncation. 
The bound in \cite{redmannbenner} was also the basis for the proof of an error bound for another balancing related method in \cite{redSPA}. 
\end{remark}

\subsection{Conditions for $\mathcal L^2(\mathcal W)$-optimality and model order reduction in the multiplicative case}

Motivated by Proposition \ref{rel_output_error}, we locally minimize $\left\| H - \hat H\right\|_{\mathcal L^2(\mathcal W)}$ with respect to the coefficients of the reduced systems \eqref{red_system_mul} using the representation in Theorem \ref{thm_stoch_h2_rep}. 
In particular, we find necessary conditions for optimality for the expression 
\begin{align*}
\mathcal E(\hat A, \hat N_i, \hat B_1, \hat C) := \trace(\hat C \hat P {\hat C}^T) - 2 \trace(C P_2 {\hat C}^T),
\end{align*}
with $\hat P$ and $P_2$ solving \eqref{red_gram} and \eqref{mixed_gram}, respectively.
\begin{thm}\label{thm_opt_cond}
Consider the system \eqref{original_system_mul} with impulse response $H$. Let the reduced system \eqref{red_system_mul} with impulse response $\hat H$ be optimal with respect to $\mathcal L^2(\mathcal W)$, i.e., 
the matrices $\hat A, \hat N_i, \hat B_1, \hat C$ locally minimize the error $\left\| H - \hat H\right\|_{\mathcal L^2(\mathcal W)}$. Then, it holds that 
\begin{equation}\label{optcond}
\begin{aligned}
           (a)\quad&\hat C \hat P = C P_{2},\quad (b)\quad \hat Q\hat  P = Q_{2} P_{2},\\
           (c)\quad&\hat Q\left(\sum_{j=1}^{m_2}\hat N_j k_{ij} \right)\hat  P = Q_{2} \left(\sum_{j=1}^{m_2} N_j k_{ij}\right) P_{2},\quad i=1,\ldots, m_2,\\
(d)\quad &\hat Q\hat B_1 \mathcal W \mathcal W^T  =  Q_{2} B_1 \mathcal W \mathcal W^T,    
\end{aligned}
\end{equation}
where $\hat P, P_2$ are the solutions to \eqref{red_gram}, \eqref{mixed_gram} and $\hat Q, Q_2$ satisfy 
 \begin{align}\label{red_gram_obs}
                  \hat A^T \hat Q + \hat Q \hat A + \sum_{i, j=1}^{m_2} \hat N_i^T \hat Q \hat N_j k_{ij} &= - \hat C^T \hat C,\\ \label{mixed_gram_obs}
                  \hat A^T Q_2 + Q_2 A + \sum_{i, j=1}^{m_2} \hat N_i^T Q_2 N_j k_{ij} &= - \hat C^T C.
                  \end{align}
\end{thm}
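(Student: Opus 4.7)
The natural approach is via the adjoint / Lagrange-multiplier method. The cost $\mathcal{E}$ depends on $(\hat A,\hat N_i,\hat B_1,\hat C)$ both directly and implicitly through $\hat P$ and $P_2$; the role of the dual matrices $\hat Q$ and $Q_2$ in \eqref{red_gram_obs}--\eqref{mixed_gram_obs} is precisely to eliminate the induced variations $\delta\hat P$ and $\delta P_2$ from the first-order conditions. Denote by $\mathcal{L}_r(X):=\hat A X+X\hat A^T+\sum_{i,j}\hat N_i X\hat N_j^T k_{ij}$ and $\mathcal{L}_m(X):=A X+X\hat A^T+\sum_{i,j}N_i X\hat N_j^T k_{ij}$ the linear operators underlying \eqref{red_gram} and \eqref{mixed_gram}. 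Equation \eqref{red_gram_obs} then reads $\mathcal{L}_r^{*}(\hat Q)=-\hat C^T\hat C$ for the Frobenius adjoint, and the transpose of \eqref{mixed_gram_obs}, together with the covariance symmetry $k_{ij}=k_{ji}$, amounts to $\mathcal{L}_m^{*}(Q_2^T)=-C^T\hat C$.

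Condition (a) is direct: since neither $\hat P$ nor $P_2$ depends on $\hat C$, differentiating $\mathcal{E}$ (and using symmetry of $\hat P$) gives $\partial_{\hat C}\mathcal{E}=2\hat C\hat P-2\,C P_2$, which vanishes iff $\hat C\hat P=C P_2$. For the remaining three derivatives, I would inject an arbitrary admissible variation $\delta$ of $\hat A$, $\hat N_\ell$, or $\hat B_1$ into \eqref{red_gram} and \eqref{mixed_gram}, differentiate, and obtain Sylvester equations $\mathcal{L}_r(\delta\hat P)=R_r(\delta)$ and $\mathcal{L}_m(\delta P_2)=R_m(\delta)$ whose right-hand sides are explicit and linear in $\delta$. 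Expanding $\delta\mathcal{E}=\trace(\hat C^T\hat C\,\delta\hat P)-2\,\trace(C^T\hat C\,\delta P_2)$ and substituting the adjoint identities above, the operators $\mathcal{L}_r^{*}$ and $\mathcal{L}_m^{*}$ shift across the Frobenius pairing to act on $\delta\hat P$ and $\delta P_2$, replacing them by $-R_r(\delta)$ and $-R_m(\delta)$ and producing a closed expression of the form $\delta\mathcal{E}=\trace(\Xi(\hat Q,Q_2)\,\delta)$. Collecting the coefficients of $\delta\hat A$, $\delta\hat N_\ell$, and $\delta\hat B_1$ separately, using that each variation is arbitrary, and exploiting the symmetry of $\hat P$ and $\hat Q$, yields (b), (c), and (d), respectively.

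The principal bookkeeping obstacle lies in the $\hat N_\ell$-variation: in $\sum_{i,j}\hat N_i\hat P\hat N_j^T k_{ij}$ both the $i=\ell$ and the $j=\ell$ summands are affected, producing the mixed combination $\delta\hat N_\ell\,\hat P\bigl(\sum_j\hat N_j k_{\ell j}\bigr)^T+\bigl(\sum_j\hat N_j k_{\ell j}\bigr)\hat P\,\delta\hat N_\ell^T$, where $k_{ij}=k_{ji}$ is what allows the two differently indexed contributions to be identified with the same weighted sum. An analogous structure emerges on the mixed side via $\sum_j N_j k_{\ell j}$, reproducing precisely the weighted combinations $\sum_{j=1}^{m_2}\hat N_j k_{ij}$ and $\sum_{j=1}^{m_2}N_j k_{ij}$ appearing in (c). A secondary subtlety worth flagging is that $P_2\in\R^{n\times r}$ and $Q_2\in\R^{r\times n}$ live in transposed spaces, so the genuine Frobenius-adjoint variable dual to $P_2$ is $Q_2^T$; the formulation in \eqref{mixed_gram_obs} is chosen precisely so that \eqref{mixed_gram} and \eqref{mixed_gram_obs} look structurally parallel once this transposition is absorbed.
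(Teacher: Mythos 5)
Your proposal is correct and follows essentially the same route as the paper: the paper also treats (a) directly, then eliminates the implicit derivatives $\partial_z\hat P$ and $\partial_z P_2$ by pairing the differentiated Gramian equations \eqref{red_gram}--\eqref{mixed_gram} against $\hat Q$ and $Q_2$ via cyclic trace identities (your Frobenius-adjoint pairing, done entrywise), with the same use of $k_{ij}=k_{ji}$ to assemble the weighted sums $\sum_j\hat N_jk_{ij}$ in (c). Only a cosmetic slip: in $\delta\mathcal E$ the second term should read $\trace\bigl((\delta P_2)\,\hat C^TC\bigr)$ rather than $\trace(C^T\hat C\,\delta P_2)$, consistent with the transposition convention you yourself flag.
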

\begin{proof}
If system \eqref{red_system_mul} is optimal with respect to the $\mathcal L^2(\mathcal W)$-norm, then we have  
\begin{align}\label{ifoptthen}
 \partial_z \mathcal E = 0 \Longleftrightarrow \partial_z \trace(\hat C \hat P {\hat C}^T)=  2 \partial_z \trace(C P_2 {\hat C}^T),                                    
\end{align}
where $z\in  \{\hat a_{km}, \hat n^{(l)}_{km}, \hat b_{kq}, \hat c_{\ell k}\}$, where $\hat A = (\hat a_{km})$, $\hat N_l = (\hat n^{(l)}_{km})$, $\hat B_1 = (\hat b_{kq})$ and $\hat C = (\hat c_{\ell k})$. Below, 
$e_k$ denotes the $k$th unit vector of suitable dimension. For $z = \hat c_{\ell k}$, \eqref{ifoptthen} becomes 
\begin{align*}
\trace(e_\ell e_k^T \hat P {\hat C}^T + {\hat C} \hat P e_k e_{\ell}^T)=  2 \trace(C P_2 e_k e_{\ell}^T).                                                                                                    
\end{align*}
Using the properties of the trace and ${\hat P}^T=\hat P$, this is equivalent to 
\begin{equation*}
e_{\ell}^T{\hat C} \hat P e_k=e_{\ell}^T C P_2 e_k
\end{equation*}
for all $\ell= 1,\ldots p$ and $k= 1, \ldots, r$. This results in equality (a): $\hat C \hat P = C P_{2}$. 
For $z = \hat a_{km}, \hat n^{(l)}_{km}, \hat b_{kq}$, \eqref{ifoptthen} becomes
\begin{align*}
\trace\left((\partial_{z} \hat P) {\hat C}^T \hat C \right)=  2 \trace\left((\partial_{z} P_2) {\hat C}^T C\right),
\end{align*}
which is equivalent to 
\begin{align*}
                        &\trace\left((\partial_{z} \hat P) (\hat A^T \hat Q + \hat Q \hat A + \sum_{i, j=1}^{m_2} \hat N_i^T \hat Q \hat N_j k_{ij}) \right)\\
                        &  =  2 \trace\left((\partial_{z} P_2) (\hat A^T Q_2 + Q_2 A + \sum_{i, j=1}^{m_2} \hat N_i^T Q_2 N_j k_{ij})\right) 
\end{align*}
using the equations for $\hat Q$ and $Q_2$. Again, by properties of the trace, the above can be reformulated as  
\begin{equation}\begin{aligned}\label{condabn}
                        &\trace\left(\left(\hat A (\partial_{z} \hat P)+(\partial_{z} \hat P) \hat A^T + \sum_{i, j=1}^{m_2} \hat N_i (\partial_{z} \hat P) \hat N_j^T k_{ij}\right) \hat Q \right)\\
                        &  =  2 \trace\left(\left( A (\partial_{z} P_2) + (\partial_{z} P_2) \hat A^T  + \sum_{i, j=1}^{m_2} N_i (\partial_{z} P_2)  \hat N_j^T k_{ij}\right) Q_2 \right),
                                                                                                            \end{aligned}
\end{equation}
taking into account that the covariance matrix $K=(k_{ij})$ is symmetric. We now derive equations for $\partial_{z} \hat P$ and $\partial_{z} P_2$ for each case.                                                                                                      
Applying $\partial_{\hat a_{km}}$ to \eqref{red_gram} and \eqref{mixed_gram}, we obtain 
\begin{align*}
e_k e_m^T \hat P + \hat A (\partial_{\hat a_{km}} \hat P) + (\partial_{\hat a_{km}} \hat P) {\hat A}^T+ \hat P e_m e_k^T +\sum_{i, j=1}^{m_2} \hat N_{i} (\partial_{\hat a_{km}} \hat P) {\hat N}_{j}^T k_{ij} &= 0,\\ 
A (\partial_{\hat a_{km}} P_2) + (\partial_{\hat a_{km}} P_2) {\hat A}^T+ P_2 e_m e_k^T +\sum_{i, j=1}^{m_2} N_{i} (\partial_{\hat a_{km}} P_2) {\hat N}_{j}^T k_{ij} &= 0.
    \end{align*}
Inserting this into \eqref{condabn}, and using symmetry of $\hat Q$ and $\hat P$ leads to 
\begin{align*}
 \trace\left((e_k e_m^T \hat P + \hat P e_m e_k^T) \hat Q \right) =  2 \trace\left((P_2 e_m e_k^T) Q_2 \right)   \Leftrightarrow  e_k^T \hat Q \hat P e_m  = e_k^T Q_2 P_2 e_m                                       
\end{align*}
for all $k, m = 1,\ldots, r$ which yields $\hat Q \hat P  =  Q_2 P_2$, i.e. equality (b). We now define $\hat\Psi_i^T := \sum_{j=1}^{m_2} {\hat N}_{j}^T k_{ij}$ and observe that 
$\partial_{\hat n^{(l)}_{km}} \hat\Psi_i^T = e_m e_k^T k_{il}$. Consequently, we have 
\begin{align*}
 \partial_{\hat n^{(l)}_{km}} \sum_{i, j=1}^{m_2} \hat N_i \hat P \hat N_j^T k_{ij} &=   \partial_{\hat n^{(l)}_{km}} \sum_{i=1}^{m_2} \hat N_i \hat P \hat\Psi_i^T  \\
 &= \sum_{i=1}^{m_2}\left( (\partial_{\hat n^{(l)}_{km}}\hat N_i) \hat P \hat\Psi_i^T + \hat N_i (\partial_{\hat n^{(l)}_{km}}\hat P) \hat\Psi_i^T + \hat N_i \hat P (\partial_{\hat n^{(l)}_{km}}\hat\Psi_i^T)\right)   \\
 &= e_k e_m^T \hat P \hat\Psi_l^T + \sum_{i=1}^{m_2} \hat N_i (\partial_{\hat n^{(l)}_{km}}\hat P) \hat\Psi_i^T + \sum_{i=1}^{m_2} \hat N_i \hat P e_m e_k^T k_{il} \\
  &= e_k e_m^T \hat P \hat\Psi_l^T+ \hat\Psi_l \hat P e_m e_k^T + \sum_{i, j=1}^{m_2} \hat N_i (\partial_{\hat n^{(l)}_{km}}\hat P)\hat N_j^T k_{ij},  
\end{align*}
since $k_{il}=k_{li}$. Analogue to the above steps, we obtain  
\begin{align*}
 \partial_{\hat n^{(l)}_{km}} \sum_{i, j=1}^{m_2} N_i P_2 \hat N_j^T k_{ij}&=  \partial_{\hat n^{(l)}_{km}} \sum_{i=1}^{m_2} N_i P_2 \hat\Psi_i^T \\
  &= \sum_{i=1}^{m_2}\left(N_i (\partial_{\hat n^{(l)}_{km}} P_2) \hat\Psi_i^T + N_i P_2 (\partial_{\hat n^{(l)}_{km}}\hat\Psi_i^T)\right)   \\
 &=   \Psi_l P_2 e_m e_k^T +  \sum_{i, j=1}^{m_2} N_i (\partial_{\hat n^{(l)}_{km}} P_2)\hat N_j^T k_{ij},
\end{align*}
where  $\Psi_l := \sum_{j=1}^{m_2} {N}_{j} k_{lj}$. Now, using these two results when applying $\partial_{\hat n^{(l)}_{km}}$ to \eqref{red_gram} and \eqref{mixed_gram}, we find 
\begin{align*}
               \hat A (\partial_{\hat n^{(l)}_{km}}\hat P) + (\partial_{\hat n^{(l)}_{km}} \hat P) {\hat A}^T+e_k e_m^T \hat P \hat\Psi_l^T+ \hat\Psi_l \hat P e_m e_k^T + \sum_{i, j=1}^{m_2} \hat N_i (\partial_{\hat n^{(l)}_{km}}\hat P)\hat N_j^T k_{ij}&= 0,\\ 
A (\partial_{\hat n^{(l)}_{km}}P_2) + (\partial_{\hat n^{(l)}_{km}} P_2) {\hat A}^T+\Psi_l P_2 e_m e_k^T + \sum_{i, j=1}^{m_2} N_i (\partial_{\hat n^{(l)}_{km}} P_2)\hat N_j^T k_{ij} &= 0.                                                                                                                
\end{align*}
We plug these into \eqref{condabn} resulting in
\begin{align*}
\trace\left((e_k e_m^T \hat P \hat\Psi_l^T+ \hat\Psi_l \hat P e_m e_k^T ) \hat Q \right)  =  2 \trace\left( \Psi_l P_2 e_m e_k^T  Q_2 \right) 
\end{align*}
for all $k, m = 1,\ldots, r$ and $l= 1, \ldots, m_2$. With the above arguments, and symmetry of $\hat P$ and $\hat Q$, this is equivalent to $\hat Q \hat\Psi_l \hat P  =   Q_2 \Psi_l P_2$ (equality (c)). It remains to apply $\partial_{\hat b_{kq}}$ to \eqref{red_gram}
and \eqref{mixed_gram} providing \begin{align*}                                                                                                      
 \hat A(\partial_{\hat b_{kq}}\hat P) + (\partial_{\hat b_{kq}}\hat P) {\hat A}^T+\sum_{i, j=1}^{m_2} \hat N_{i} (\partial_{\hat b_{kq}}\hat P) {\hat N}_{j}^T k_{ij} &= -(e_k e_q^T \mathcal W \mathcal W^T \hat B_1^T
 +\hat B_1 \mathcal W \mathcal W^T e_q e_k^T),\\ 
A (\partial_{\hat b_{kq}}P_2) + (\partial_{\hat b_{kq}} P_2) {\hat A}^T+\sum_{i, j=1}^{m_2} N_{i} (\partial_{\hat b_{kq}} P_2) {\hat N}_{j}^T k_{ij} &= -B_1 \mathcal W \mathcal W^T e_q e_k^T.
    \end{align*}                                            
Using this for \eqref{condabn}, we have   
\begin{align*}
  \trace\left((e_k e_q^T \mathcal W \mathcal W^T \hat B_1^T+\hat B_1 \mathcal W \mathcal W^T e_q e_k^T) \hat Q \right)  =  2 \trace\left( B_1 \mathcal W \mathcal W^T e_q e_k^T  Q_2 \right)
\end{align*}   
for all $k= 1,\ldots, r$ and $q=1, \ldots, m_1$. This results in the last property (d), i.e.     
$\hat Q \hat B_1 \mathcal W \mathcal W^T  =   Q_2 B_1 \mathcal W \mathcal W^T$ and concludes the proof.
\end{proof}
\begin{remark}
With the observations made in Section \ref{stochH2}, it is not surprising that setting $m_1 = m_2$, $K=(k_{ij}) = I$ and $\mathcal W = I$ in \eqref{optcond} leads to the necessary first-order optimality conditions in the deterministic 
bilinear case \cite{morZhaL02}. If $N_i = 0$ for all $i=1, \ldots, m_2$, then \eqref{optcond} represents a special case of weighted $\mathcal H_2$-optimality conditions in the deterministic linear setting \cite{halevi}, where 
the weight is a constant matrix. If further $\mathcal W=I$, then one obtains the Wilson conditions \cite{Wilson}.
\end{remark}
With the link between the $\mathcal L^2(\mathcal W)$-norm and the bilinear $\mathcal H_2$-norm given by Theorems \ref{thm_stoch_h2_rep} and \ref{thm_opt_cond}, it is now intuitively clear how to construct a reduced order system 
\eqref{red_system_mul} that satisfies \eqref{optcond}. The approach is oriented on bilinear IRKA \cite{breiten_benner} which fulfills the necessary conditions for local optimality given in \cite{morZhaL02}. Its modified version 
designed to satisfy \eqref{optcond} is provided in Algorithm \ref{algo:MBIRKA}. This algorithm requires that the matrix $\hat A$ is diagonalizable which we assume throughout this paper. In many applications this can be guaranteed.
\begin{algorithm}[!tb]
	\caption{Modified Bilinear IRKA}
	\label{algo:MBIRKA}
	\begin{algorithmic}[1]
		\Statex {\bf Input:} The system matrices: $ A, B_1, C, N_i$. Covariance and weight matrices: $K, \mathcal W$.
		\Statex {\bf Output:} The reduced matrices: $\hat A, \hat B_1,\hat C, \hat N_i$.
		\State Make an initial guess for the reduced matrices $\hat A, \hat B_1,\hat C, \hat N_i$.
\While {not converged}
		\State Perform the spectral decomposition of $\hA$ and define:
		\Statex\quad\qquad $D = S\hat A S^{-1},~\tilde B_1 = S\hat B_1, ~\tilde C = \hat C S^{-1}, ~\tilde N_i = S\hat N_i S^{-1}.$
		\State Solve for $V$ and $W$:
		
		\Statex \quad\qquad$ V D  +  AV + \sum_{i, j=1}^{m_2} N_i V \tilde N_j^T k_{ij}  = -(B_1\mathcal W)(\tilde B_1\mathcal W)^T$,
		\Statex \quad\qquad$ W D  +  A^T W + \sum_{i, j=1}^{m_2} N_i^T W \tilde N_j k_{ij} = -C^T\tilde C$.
\State $V = \orth{(V)}$ and $W = \orth{(W)}$, where $\orth{(\cdot)}$ returns an orthonormal basis for the range of a matrix.
		\State Determine the reduced matrices:
		\Statex \quad\qquad$\hat A = (W^T V)^{-1}W^T AV,\quad \hat B_1 = (W^T V)^{-1}W^T B_1,\quad\hat C = CV$, \quad $\hat N_i = (W^T V)^{-1}W^T N_i V$.
		\EndWhile
	\end{algorithmic}
\end{algorithm}
The following theorem proves that Algorithm \ref{algo:MBIRKA} provides reduced matrices that satisfy \eqref{optcond}. We shall later apply Algorithm \ref{algo:MBIRKA} with $\mathcal W = I$ to \eqref{original_system_mul}
in order to obtain a small output error for the resulting reduced system \eqref{red_system_mul} using Proposition \ref{rel_output_error}.
\begin{thm}\label{thm:error_opt_cond}
Let $\hat A$, $\hat N_i$, $\hat B$ and $\hat C$ be the reduced-order matrices computed by Algorithm~\ref{algo:MBIRKA} assuming that it converged. Then, $\hat A$, $\hat N_i$, $\hat B$ and $\hat C$ satisfy 
the necessary conditions \eqref{optcond} for local $\mathcal L^2(\mathcal W)$-optimality.
\end{thm}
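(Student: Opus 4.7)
The plan is to show that at convergence the (orthogonalized) matrices $V$ and $W$ produced by Algorithm~\ref{algo:MBIRKA} span exactly the column spaces of $P_2$ and $Q_2^T$; the Petrov--Galerkin formulas will then force the identities $P_2=V\hat P$ and $Q_2=\hat Q (W^TV)^{-1}W^T$, which collapse the four conditions \eqref{optcond} to elementary algebra. The first key step is to match the Sylvester-type equations for $V$ and $W$ in Step~4 of the algorithm with the mixed-Gramian equations \eqref{mixed_gram}--\eqref{mixed_gram_obs}. Because $D=S\hat A S^{-1}$ is diagonal, $D=D^T$, whence $\hat A^TS^T=S^TD$; by the same token $\hat N_j^TS^T=S^T\tilde N_j^T$ and $(\hat B_1\mathcal W)^TS^T=(\tilde B_1\mathcal W)^T$. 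Right-multiplying \eqref{mixed_gram} by $S^T$ shows that $P_2 S^T$ solves the equation defining $V$, so by uniqueness of the Sylvester solution the pre-orthogonalized $V$ equals $P_2 S^T$, and the column spaces of $V$ and $P_2$ coincide. An analogous argument starting from the transpose of \eqref{mixed_gram_obs}, right-multiplied by $S^{-1}$, shows $W=Q_2^T S^{-1}$ and hence that $W$ and $Q_2^T$ have the same column space. Orthogonalization in Step~5 preserves these spaces.

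Next I project the mixed Gramian equations onto the reduced bases. Since $V$ has orthonormal columns and $\mathrm{range}\,P_2\subseteq\mathrm{range}\,V$, I may write $P_2 = V Y_P$ with $Y_P = V^T P_2$. Substituting into \eqref{mixed_gram} and pre-multiplying by $(W^TV)^{-1}W^T$, the projection formulas $\hat A=(W^TV)^{-1}W^TAV$, $\hat N_i=(W^TV)^{-1}W^TN_iV$, $\hat B_1=(W^TV)^{-1}W^TB_1$ from Step~6 reduce this to
\begin{align*}
\hat A\,Y_P + Y_P\,\hat A^T + \sum_{i,j=1}^{m_2}\hat N_i\,Y_P\,\hat N_j^T\,k_{ij} = -\hat B_1\mathcal W(\hat B_1\mathcal W)^T,
\end{align*}
which is precisely the reduced Gramian equation \eqref{red_gram}. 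Uniqueness yields $Y_P=\hat P$, so $P_2 = V\hat P$. Writing $Q_2 = Y_Q W^T$ with $Y_Q = Q_2 W$ and running the same argument on \eqref{mixed_gram_obs}, post-multiplied by $V$ and using $CV=\hat C$, produces \eqref{red_gram_obs} for the matrix $Y_Q (W^TV)$, so that $Q_2 = \hat Q (W^TV)^{-1} W^T$.

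The four optimality conditions now drop out by direct substitution. For (a), $\hat C\hat P=CV\hat P=CP_2$. For (b), $Q_2 P_2 = \hat Q(W^TV)^{-1}(W^TV)\hat P = \hat Q\hat P$. For (c), inserting $\sum_{j=1}^{m_2}k_{ij}N_j$ between $W^T$ and $V$ and using $W^TN_jV=(W^TV)\hat N_j$ gives $Q_2\bigl(\sum_j k_{ij}N_j\bigr)P_2=\hat Q\bigl(\sum_j k_{ij}\hat N_j\bigr)\hat P$. For (d), $Q_2 B_1\mathcal W\mathcal W^T = \hat Q(W^TV)^{-1}W^T B_1\mathcal W\mathcal W^T = \hat Q\hat B_1\mathcal W\mathcal W^T$. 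The main technical obstacle I expect is the range-identification step, where the bookkeeping with the similarity transformations $\tilde N_j=S\hat N_jS^{-1}$, $\tilde B_1=S\hat B_1$, $\tilde C=\hat CS^{-1}$ must be carefully combined with the symmetry $D=D^T$; one also has to implicitly assume that the converged reduced model inherits Assumption~\ref{assumption}, guaranteeing that the Sylvester equations admit unique solutions and that all four Gramians $\hat P, \hat Q, P_2, Q_2$ are well defined.
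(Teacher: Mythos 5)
Your proposal is correct, and it rests on the same key identification as the paper's proof: because $D=S\hat AS^{-1}$ is diagonal (hence $D=D^T$), right-multiplying \eqref{mixed_gram} by $S^T$ and (the transpose of) \eqref{mixed_gram_obs} by $S^{-1}$ shows that the Sylvester equations in Step~4 of Algorithm~\ref{algo:MBIRKA} are solved by $P_2S^T$ and $Q_2^TS^{-1}$, so that at a fixed point $\operatorname{range}(V)=\operatorname{range}(P_2)$ and $\operatorname{range}(W)=\operatorname{range}(Q_2^T)$. Where you diverge is in how this is exploited. The paper vectorizes everything, works with the Kronecker operators $\mathcal K$ and $\hat{\mathcal K}$, establishes $\vect(\hat QS^{-1})=(I\otimes V^T)\vect(W)$ and $\vect(\hat PS^T)=(I\otimes (W^TV)^{-1}W^T)\vect(V)$, and verifies only condition (c) explicitly, asserting that (a), (b), (d) are analogous. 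You instead project the mixed Gramian equations onto the bases to obtain the clean factorizations $P_2=V\hat P$ and $Q_2=\hat Q(W^TV)^{-1}W^T$, after which all four conditions in \eqref{optcond} follow by one-line substitutions using $W^TN_jV=(W^TV)\hat N_j$ etc. Your route is more elementary (no Kronecker bookkeeping) and more complete (all four conditions are checked uniformly); the paper's vectorized form is closer to the original bilinear IRKA argument of Benner and Breiten and makes the role of the operator $\hat{\mathcal K}$ explicit. Both arguments share the same implicit hypotheses, which you correctly flag: the orthogonalization in Step~5 only preserves ranges, so the identity $V=P_2S^T$ must be read modulo an invertible right factor (your range argument absorbs this), and unique solvability of \eqref{red_gram}, \eqref{red_gram_obs}, \eqref{mixed_gram}, \eqref{mixed_gram_obs} — i.e., invertibility of the relevant (mixed) Lyapunov operators, guaranteed when Assumption~\ref{assumption} holds for both the full and the converged reduced system — is needed for the uniqueness steps.
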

\begin{proof}
The techniques used to prove the result are similar to the ones used in \cite{breiten_benner}. We provide the proof in Appendix \ref{proof_opt_cond} but we restrict ourselves to the conditions related 
to $\hat N_i$ because these differ significantly from the respective bilinear conditions.
\end{proof}

Finally, we note that, so far, many balancing related techniques for stochastic systems \eqref{original_system_mul} with multiplicative noise have been studied \cite{beckerhartmann, bennerdamm, redmannbenner, dammbennernewansatz, redmannspa2, redSPA}. Algorithm \ref{algo:MBIRKA} is the first alternative to such techniques for stochastic systems.

\section{Systems with additive noise}
\label{sec:add}

We now focus on stochastic systems with additive noise. These are of the form \begin{subequations}\label{original_system_add}
\begin{align}\label{stochstate_add}
 dx(t) &= [Ax(t) +  B_1 u(t)]dt + B_2 dM(t),\quad x(0) = x_0,\\ \label{stochout_add}
y(t) &= Cx(t),\quad t\geq 0,
\end{align}
\end{subequations}
for which \eqref{stab_cond_add} is assumed. We find a ROM based on the minimization of an error bound for this case. 
Fortunately, we do not have to repeat the entire theory again since it can be derived from the above results for systems
with multiplicative noise. However, we will provide two different approaches to reduce system \eqref{original_system_add}. The first one relies on splitting and reducing subsystems of \eqref{original_system_add} individually and 
subsequently obtain the reduced system as the sum of the reduced subsystems. In the second approach, \eqref{original_system_add} will be reduced directly. We assume that \eqref{stab_cond_add} holds for all reduced systems below.

\subsection{Two step reduced order model}

We can write the state in \eqref{stochstate_add} as $x = x_1 + x_2$, where $x_1$ and $x_2$ are solutions to subsystems with corresponding outputs $y_1 = Cx_1$ and 
$y_2 = Cx_2$. Hence, we can rewrite \eqref{original_system_add} 
as follows:
\begin{subequations}\label{original_system_add_subsystem}
\begin{align}\label{stochstate_add_sub1}
 dx_1(t) &= [Ax_1(t) +  B_1 u(t)]dt,\quad x_1(0) = x_0,\\ \label{stochstate_add_sub2}
  dx_2(t) &= Ax_2(t)dt + B_2 dM(t),\quad x_2(0) = 0, \\
 \label{stochout_add_subsystems}
y(t) &= y_1(t) + y_2(t) = Cx_1(t)+ C x_2(t),\quad t\geq 0.
\end{align}
\end{subequations}
Now, the idea is to reduce \eqref{stochstate_add_sub1} and  \eqref{stochstate_add_sub2} with their associated outputs separately resulting in the following reduced system 
\begin{subequations}\label{red_system_add_subsystem}
\begin{align}\label{red_stochstate_add_sub1}
 d\hat x_1(t) &= [\hat A_1 \hat x_1(t) +  \hat B_1 u(t)]dt,\quad \hat x_1(0) = \hat x_0,\\ \label{red_stochstate_add_sub2}
  d\hat x_2(t) &= \hat A_2 \hat x_2(t)dt + \hat B_2 dM(t),\quad \hat x_2(0) = 0, \\
 \label{red_stochout_add_subsystems}
\hat y(t) &= \hat y_1(t) + \hat y_2(t) = \hat C_1 \hat x_1(t)+ \hat C_2\hat x_2(t),\quad t\geq 0,
\end{align}
\end{subequations}
where $\hat x_i(t)\in \mathbb R^{r_i}$ ($i=1, 2$) etc. with $r_i\ll n$. Note that there are several MOR techniques like balancing related schemes \cite{spa, moore} or $\mathcal H_2$-optimal methods \cite{linIRKA} to reduce the deterministic subsystem \eqref{stochstate_add_sub1}. 
Moreover, balancing related methods are available for \eqref{stochstate_add_sub2}, see \cite{hartmann2011, redmannfreitag}. We derive an optimization based scheme for \eqref{stochstate_add_sub2} below and combine it 
with \cite{linIRKA} for \eqref{stochstate_add_sub1} leading to a new type of method to reduce \eqref{original_system_add}.
\smallskip

The reduced system \eqref{red_system_add_subsystem} provides a higher flexibility since we are not forced to choose $\hat A_1 = \hat A_2 = \hat A$ and $\hat C_1 = \hat C_2 = \hat C$ as it would be the case 
if we apply an algorithm to \eqref{original_system_add} directly. Moreover, we are free in the choice of the dimension 
in each reduced subsystem. This is very beneficial since one subsystem might be easier to reduce than the other (note that one system is entirely deterministic, whereas the other one is stochastic). This additional flexibility is expected to give a better reduced system 
than by a reduction of \eqref{original_system_add} in one step. However, it is more expensive to run a model reduction procedure twice.\smallskip

We now explain how to derive the reduced matrices above, assuming $x_0=0$ and $\hat x_0 = 0$. Using the inequality of Cauchy-Schwarz, we obtain \begin{align*}
      \mathbb E \left\|y(t) - \hat y(t) \right\|  &\leq \mathbb E \left\|y_1(t) - \hat y_1(t) \right\|  +\mathbb E \left\|y_2(t) - \hat y_2(t) \right\| \\
      &\leq \mathbb E \left\|y_1(t) - \hat y_1(t) \right\|  +\left(\mathbb E \left\|y_2(t) - \hat y_2(t) \right\|^2\right)^{\frac{1}{2}}.                                                             
                                                                     \end{align*}
We insert the solution representations for both $x_1$ and $x_2$ as well as for their reduced systems into the above relation and find 
\begin{align*}
      \mathbb E \left\|y(t) - \hat y(t) \right\| \leq\, &\mathbb E \int_0^t\left\|\left(C\expn^{A(t-s)} B_1 - \hat C_1 \expn^{\hat A_1(t-s)} \hat B_1\right)u(s)\right\| ds  \\
      &+\left(\mathbb E \left\|\int_0^t\left(C\expn^{A(t-s)} B_2 - \hat C_2 \expn^{\hat A_2(t-s)} \hat B_2\right) dM(s)  \right\|^2\right)^{\frac{1}{2}}\\
      \leq\, &\left(\int_0^t \left\|C\expn^{A(t-s)} B_1 - \hat C_1\expn^{\hat A_1(t-s)} \hat B_1\right\|_F^2  ds\right)^{\frac{1}{2}}  \left\| u\right\|_{L^2_t}\\
      &+\left(\int_0^t\left\|\left(C\expn^{A(t-s)} B_2 - \hat C_2 \expn^{\hat A_2(t-s)} \hat B_2\right)K^{\frac{1}{2}}\right\|_F^2 ds  \right)^{\frac{1}{2}}\\
      \leq\, &\left(\int_0^\infty \left\|C\expn^{As} B_1 - \hat C_1 \expn^{\hat A_1s} \hat B_1\right\|_F^2  ds\right)^{\frac{1}{2}}  \left\| u\right\|_{L^2_t}\\
      &+\left(\int_0^\infty\left\|\left(C\expn^{As} B_2 - \hat C_2 \expn^{\hat A_2s} \hat B_2\right)K^{\frac{1}{2}}\right\|_F^2 ds  \right)^{\frac{1}{2}},
                                                                     \end{align*}
where we have applied the Cauchy-Schwarz inequality to the first error term and the Ito isometry to the second one (see, e.g., \cite{zabczyk}) and substituted $t-s \mapsto s$. Now, applying the supremum on $[0, T]$ to the above 
inequality yields
\begin{align}\label{error_est_add}
  \sup_{t\in [0, T]} \mathbb E \left\|y(t) - \hat y(t) \right\| \leq  \underbrace{\left\|C\expn^{A \cdot} B_1 - \hat C_1\expn^{\hat A_1 \cdot} \hat B_1\right\|_{\mathcal L^2(I)}}_{=:\mathcal E_1}\left\| u\right\|_{L^2_T}
      +\underbrace{\left\|C\expn^{A \cdot} B_2 - \hat C_2\expn^{\hat A_2 \cdot} \hat B_2\right\|_{\mathcal L^2(K^{\frac{1}{2}})}}_{=:\mathcal E_2}.
                                                                     \end{align}
In order for the right hand side to be small, the reduced order matrices need to be chosen such that $\mathcal E_1$ and $\mathcal E_2$ are locally minimal. We observe that $\mathcal E_1$ and $\mathcal E_2$
are special cases of the $\mathcal L^2(\mathcal W)$-distance of the impulse responses in the multiplicative noise scenario, see Section \ref{stochH2}, since $\Phi(t) = \expn^{A t}$ if $N_i=0$ for all $i = 1, \ldots, m_2$. Here, we have $N_i=0$, $\mathcal W= I$ for $\mathcal E_1$ and $N_i=0$, $\mathcal W= K^{\frac{1}{2}}$ with $B_1$ replaced by $B_2$ for $\mathcal{E}_2$. Consequently, Algorithm \ref{algo:MBIRKA} with $N_i=0$ and the respective choice for the weight matrices satisfies the necessary 
conditions for local optimality for $\mathcal E_1$ and $\mathcal E_2$. Therefore, $(\hat A_i, \hat B_i, \hat C_i)$ ($i=1, 2$) can be computed from Algorithm \ref{algo:IRKA} (a special version of Algorithm \ref{algo:MBIRKA}) which is a modified version of
linear IRKA \cite{linIRKA}.  
\begin{algorithm}[!tb]
	\caption{Modified Two Step Linear IRKA ($i=1, 2$)}
	\label{algo:IRKA}
	\begin{algorithmic}[1]
		\Statex {\bf Input:} The system matrices: $ A, B_i, C$. Weight: $\mathcal W_i = \begin{cases}
  I,  & i=1,\\
  K^{\frac{1}{2}}, & i=2.
\end{cases}$
		\Statex {\bf Output:} The reduced matrices: $\hat A_i, \hat B_i,\hat C_i$.
		\State Make an initial guess for the reduced matrices $\hat A_i, \hat B_i,\hat C_i$.
\While {not converged}
		\State Perform the spectral decomposition of $\hat A_i$ and define:
		\Statex\quad\qquad $D_i = S\hat A_i S^{-1},~\tilde B_i = S\hat B_i, ~\tilde C_i = \hat C_i S^{-1}.$
		\State Solve for $V$ and $W$:
		
		\Statex \quad\qquad$ V D_i  +  AV = -(B_i\mathcal W_i)(\tilde B_i\mathcal W_i)^T$,
		\Statex \quad\qquad$ W D_i  +  A^T W  = -C^T\tilde C_i$.
\State $V = \orth{(V)}$ and $W = \orth{(W)}$.
		\State Determine the reduced matrices:
		\Statex \quad\qquad$\hat A_i = (W^T V)^{-1}W^T AV,\quad \hat B_i = (W^T V)^{-1}W^T B_i,\quad\hat C_i = CV$.
		\EndWhile
	\end{algorithmic}
\end{algorithm}

\subsection{One step reduced order model}

The second approach for additive noise reduces \eqref{original_system_add} directly without dividing it into subsystems. To do so, we set
\begin{align}\label{set_coef_eq}
 \hat A_1 = \hat A_2 = \hat A,\quad \hat C_1 = \hat C_2 = \hat C
\end{align}
in \eqref{red_system_add_subsystem}. This results in the following reduced system 
 \begin{subequations}\label{red_system_add}
\begin{align}\label{red_stochstate_add}
 d\hat x(t) &= [\hat A\hat x(t) +  \hat B_1 u(t)]dt + \hat B_2 dM(t),\quad \hat x(0) = \hat x_0,\\ \label{red_stochout_add}
\hat y(t) &= \hat C\hat x(t),\quad t\geq 0,
\end{align}
\end{subequations}
where $\hat x(t)\in \mathbb R^{r}$ etc. with $r\ll n$. Again, we assume that $x_0=0$ and $\hat x_0=0$. We insert \eqref{set_coef_eq} into \eqref{error_est_add} and obtain
\begin{align}\nonumber
  &\sup_{t\in [0, T]} \mathbb E \left\|y(t) - \hat y(t) \right\| \leq  \left\|C\expn^{A \cdot} B_1 - \hat C\expn^{\hat A \cdot} \hat B_1\right\|_{\mathcal L^2(I)}\left\| u\right\|_{L^2_T}
      +\left\|C\expn^{A \cdot} B_2 - \hat C \expn^{\hat A \cdot} \hat B_2\right\|_{\mathcal L^2(K^{\frac{1}{2}})}\\ \label{one_tep_bound1}
&\leq\left(\left\|C\expn^{A \cdot} B_1 - \hat C\expn^{\hat A \cdot} \hat B_1\right\|_{\mathcal L^2(I)} + \left\|C\expn^{A \cdot} B_2 - \hat C \expn^{\hat A \cdot} \hat B_2\right\|_{\mathcal L^2(K^{\frac{1}{2}})}\right)
 \max\{1, \left\| u\right\|_{L^2_T}\}\\ \nonumber
 &\leq \sqrt{2} \left(\left\|C\expn^{A \cdot} B_1 - \hat C \expn^{\hat A \cdot} \hat B_1\right\|_{\mathcal L^2(I)}^2
 + \left\|C\expn^{A \cdot} B_2 - \hat C\expn^{\hat A \cdot} \hat B_2\right\|_{\mathcal L^2(K^{\frac{1}{2}})}^2\right)^{\frac{1}{2}}  \max\{1, \left\| u\right\|_{L^2_T}\}.
                                                                     \end{align}
Now, we exploit that $\left\|L_1\right\|_F^2 + \left\|L_2\right\|_F^2 = \left\|\mat{cc} L_1 & L_2\rix\right\|_F^2$ for matrices $L_1, L_2$ of suitable dimension. Hence, we have\begin{align*}
 &\left\|C\expn^{A s} B_1 - \hat C\expn^{\hat A s} \hat B_1\right\|_F^2  + \left\|C\expn^{As} B_2 K^{\frac{1}{2}} - \hat C \expn^{\hat As} \hat B_2 K^{\frac{1}{2}}\right\|_F^2\\
 &= \left\|C\expn^{A s} \mat{cc} B_1& B_2 K^{\frac{1}{2}}\rix - \hat C\expn^{\hat A s} \mat{cc} \hat B_1 &\hat B_2 K^{\frac{1}{2}}\rix\right\|_F^2 \\
 &= \left\|\left(C\expn^{A s} \mat{cc} B_1& B_2\rix - \hat C\expn^{\hat A s} \mat{cc} \hat B_1 &\hat B_2 \rix\right)\smat I & 0\\ 0 & K^{\frac{1}{2}}\srix\right\|_F^2.
                     \end{align*}
Plugging this into \eqref{one_tep_bound1} yields \begin{align}\label{one_step_bound}
  \sup_{t\in [0, T]} \mathbb E \left\|y(t) - \hat y(t) \right\| \leq    
  \sqrt{2} \underbrace{\left\|C\expn^{A \cdot} \mat{cc} B_1& B_2\rix - \hat C \expn^{\hat A \cdot} \mat{cc} \hat B_1 &\hat B_2 \rix\right\|_{\mathcal L^2(\mathcal W)}}_{=:\mathcal E_3} \max\{1, \left\| u\right\|_{L^2_T}\},
                                                 \end{align}
where $\mathcal W= \smat I & 0\\ 0 & K^{\frac{1}{2}}\srix$. We now want to find a ROM such that $\mathcal E_3$ is small leading to a small output error. Again, $\mathcal E_3$ is a special case 
of the impulse response error of a stochastic system with multiplicative noise, where $N_i=0$, $B_1$ is replaced by $B = \mat{cc}  B_1 & B_2 \rix$ and $\mathcal W= \smat I & 0\\ 0 & K^{\frac{1}{2}}\srix$, cf. Section \ref{stochH2}. Taking this into account in Algorithm \ref{algo:MBIRKA} we obtain a method that satisfies the necessary optimality conditions for $\mathcal E_3$. This method is given 
in Algorithm \ref{algo:IRKA2} and again represents a modified version of linear IRKA. We can therefore apply Algorithm \ref{algo:IRKA2} in order to compute the reduced matrices $(\hat A, \hat B_1, \hat B_2, \hat C)$ in \eqref{red_system_add}. 
This scheme is computationally cheaper than Algorithm \ref{algo:IRKA} but cannot be expected to perform in the same way. The reduced system  \eqref{red_system_add} is less flexible than \eqref{red_system_add_subsystem} in terms of 
the choice of the reduced order dimensions and coefficients and it furthermore relies on the minimization of a more conservative bound in \eqref{one_step_bound} in comparison to \eqref{error_est_add}. 
Note that with Algorithm \ref{algo:IRKA2}, an alternative method to applying balanced truncation to \eqref{original_system_add} (see, for example \cite{stochInhom}) has been found.
\begin{algorithm}[!tb]
	\caption{Modified One Step Linear IRKA}
	\label{algo:IRKA2}
	\begin{algorithmic}[1]
		\Statex {\bf Input:} The system matrices: $ A, B=\mat{cc} B_1 & B_2\rix, C$. Weight: $\mathcal W = \smat I & 0\\ 0 & K^{\frac{1}{2}}\srix$.
		\Statex {\bf Output:} The reduced matrices: $\hat A, \hat B=\mat{cc} \hat B_1 & \hat B_2\rix,\hat C$.
		\State Make an initial guess for the reduced matrices $\hat A, \hat B=\mat{cc} \hat B_1 & \hat B_2\rix,\hat C$.
\While {not converged}
		\State Perform the spectral decomposition of $\hat A_i$ and define:
		\Statex\quad\qquad $D = S\hat A S^{-1},~\tilde B = S\hat B, ~\tilde C = \hat C S^{-1}.$
		\State Solve for $V$ and $W$:
		
		\Statex \quad\qquad$ V D  +  AV = -(B\mathcal W)(\tilde B\mathcal W)^T = -B \smat I & 0\\ 0 & K\srix\tilde B^T$,
		\Statex \quad\qquad$ W D  +  A^T W  = -C^T\tilde C$.
\State $V = \orth{(V)}$ and $W = \orth{(W)}$.
		\State Determine the reduced matrices:
		\Statex \quad\qquad$\hat A = (W^T V)^{-1}W^T AV,\quad \mat{cc} \hat B_1 & \hat B_2\rix=\hat B = (W^T V)^{-1}W^T B,\quad\hat C = CV$.
		\EndWhile
	\end{algorithmic}
\end{algorithm}

\section{Numerical experiments}
\label{sec:numerics}

We now apply Algorithm  \ref{algo:MBIRKA} to a large-scale stochastic differential equation with multiplicative noise as well as Algorithms \ref{algo:IRKA} and \ref{algo:IRKA2} to a high dimensional equation 
with additive noise. In both scenarios, the examples are spatially discretized versions of controlled stochastic damped wave equations. These equations are extensions of the numerical examples considered 
in \cite{redbensec,redmannfreitag}. In particular, we study 
\begin{align*}
 \frac{\partial^2}{\partial t^2} X(t, z)+\alpha \frac{\partial}{\partial t} 
 X(t, z)=\frac{\partial^2}{\partial z^2} X(t,z)+f_1(z)u(t)+
 \begin{cases}
  \sum_{i=1}^2 f_{2, i}(z) \frac{\partial}{\partial t} w_i(t),\\
  \sum_{i=1}^2 g_{i}(z) X(t, z) \frac{\partial}{\partial t} w_i(t), 
\end{cases}
 \end{align*}
for $t\in[0, T]$ and $z\in[0, \pi]$, and $w_1$ and $w_2$ are standard Wiener processes that are correlated. Boundary and initial conditions are given by \begin{align*}
  X(0, t)=0=X(\pi, t)\quad\text{and}\quad X(0, z), \left.\frac{\partial}{\partial t} X(t, z)\right\vert_{t=0}\equiv 0.
\end{align*}
We assume that the quantity of interest is either the position of the midpoint 
\begin{align*}
Y(t)=\frac{1}{2\epsilon}\int_{\frac{\pi}{2}-\epsilon}^{\frac{\pi}{2}+\epsilon} X(t,z) dz,
\end{align*}
or the velocity of the midpoint 
\begin{align*}
Y(t)=\frac{1}{2\epsilon}\int_{\frac{\pi}{2}-\epsilon}^{\frac{\pi}{2}+\epsilon} \frac{\partial}{\partial t}X(t,z) dz,
\end{align*}
where $\epsilon>0$ is small. We can transform the above wave equation into a first order system and discretize it using a spectral Galerkin method as in \cite{redbensec,redmannfreitag}. This leads to 
the following stochastic differential equations
\begin{subequations}\label{example_SDE}
\begin{align}
 dx(t) &= [Ax(t) +  B_1 u(t)]dt + \begin{cases}
  B_2 \smat dw_1(t) & dw_2(t)\srix^T,\\
  \sum_{i=1}^{2} N_i x(t) dw_i(t),
\end{cases}\\ 
y(t) &= Cx(t),\quad x(0) = 0,\quad t\in [0, T],
\end{align}
\end{subequations}
where $y \approx Y$ if the dimension $n$ of $x$ is sufficiently large. Let $n$ be even. Then, for $\ell= 1, \ldots, \frac{n}{2}$, the associated matrices are
\begin{itemize}
\item{$A=\diag \left(E_1, \ldots, 
E_{\frac{n}{2}}\right)$ with $E_\ell=\left( \begin{smallmatrix} 0 & \ell\\ 
-\ell &-\alpha \end{smallmatrix}\right)$,}
\item{$B_1=\left(b^{(1)}_k\right)_{k=1, \ldots, n}$ with 
\begin{align*}
b^{(1)}_{2\ell-1}=0,\quad b^{(1)}_{2\ell}=\sqrt{\frac{2}{\pi}} \left\langle 
f_1, \sin(\ell\cdot) \right\rangle_{L^2{([0, \pi])}},\end{align*}} 
\item{$B_2=\mat{cc} B_2^{(1)} &B_2^{(2)} \rix$, where  $B_2^{(i)}=\left(b^{(2, i)}_k\right)_{k=1, \ldots, n}$ with 
\begin{align*}
b^{(2, i)}_{2\ell-1}=0,\quad b^{(2, i)}_{2\ell}=\sqrt{\frac{2}{\pi}} \left\langle 
f_{2, i}, \sin(\ell\cdot) \right\rangle_{L^2{([0, \pi])}},\end{align*}}
\item {$N_i=\left( n^{(i)}_{kj}\right)_{k, j=1, \ldots, n}$ with \begin{align*}
n^{(i)}_{(2 \ell-1)j}=0,\quad n^{(i)}_{(2 \ell)j}=\begin{cases}
  0,  & \text{if } j=2v,\\
  \frac{2}{\pi v}\left\langle 
\sin(\ell\cdot), g_i\sin(v\cdot) \right\rangle_{L^2{([0, \pi])}}, & \text{if }j=2v-1,
\end{cases}\end{align*} for $i=1, 2$, $j=1, \ldots, n$ and $v=1, \ldots, \frac{n}{2}$,
} 
\item{$C^T=\left(c_k\right)_{k=1, \ldots, n}$ with \begin{align*}
c_{2 \ell}=0\quad c_{2\ell-1}=\frac{1}{\sqrt{2\pi}\ell^2\epsilon}\left[ \cos \left(\ell \left(\frac{\pi}{2}
-\epsilon \right)\right)-\cos\left(\ell\left(\frac { \pi } { 2 }+\epsilon 
\right)\right)\right],\end{align*}
if we are interested in the position as output and 
\begin{align*}
c_{2 \ell-1}=0\quad c_{2\ell}=\frac{1}{\sqrt{2\pi}\ell\epsilon}\left[ \cos \left(\ell \left(\frac{\pi}{2}
-\epsilon \right)\right)-\cos\left(\ell\left(\frac { \pi } { 2 }+\epsilon 
\right)\right)\right],\end{align*}
if we are interested in the velocity as output.
}
\end{itemize}
For the following examples we choose $n = 1000$, $T = 1$ and the correlation $\mathbb E [w_1(t) w_2(t)] = 0.5 t$ meaning that $K=\smat 1 & 0.5 \\ 0.5 & 1 \srix$.

\paragraph{Multiplicative noise}
We start with the multiplicative case in \eqref{example_SDE} and compute the ROM \eqref{red_system_mul} by Algorithm \ref{algo:MBIRKA}. We use $\alpha=2$, $f_1(z)=\sin(3z)$, $g_{1}(z) = \expn^{-(z-\frac{\pi}{2})^2}$ and $g_{2}(z) = \expn^{-\frac{1}{2}(z-\frac{\pi}{2})^2}$. As input function we take $u(t) = e^{-0.1t}$. We compute a ROM of dimension $r=6$ using the modified bilinear IRKA algorithm. 
\begin{figure}[ht]
\begin{minipage}[b]{0.45\linewidth}
\centering
\includegraphics[width=\textwidth,height = 5cm]{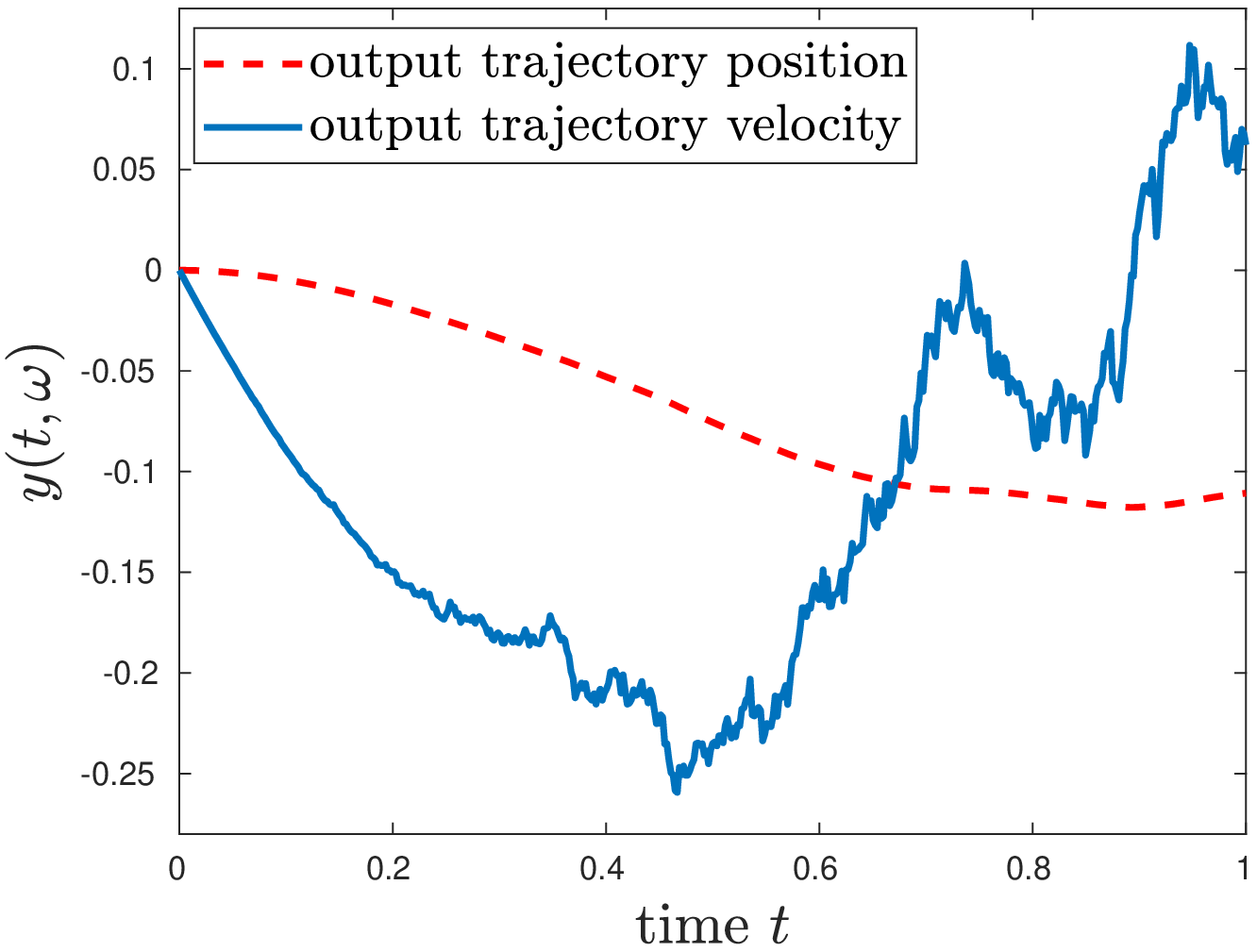}
\caption{Trajectory of position and velocity in the middle of the string.}
\label{fig:samplemult}
\end{minipage}
\hspace{0.5cm}
\begin{minipage}[b]{0.45\linewidth}
\centering
\includegraphics[width=\textwidth, height = 5cm]{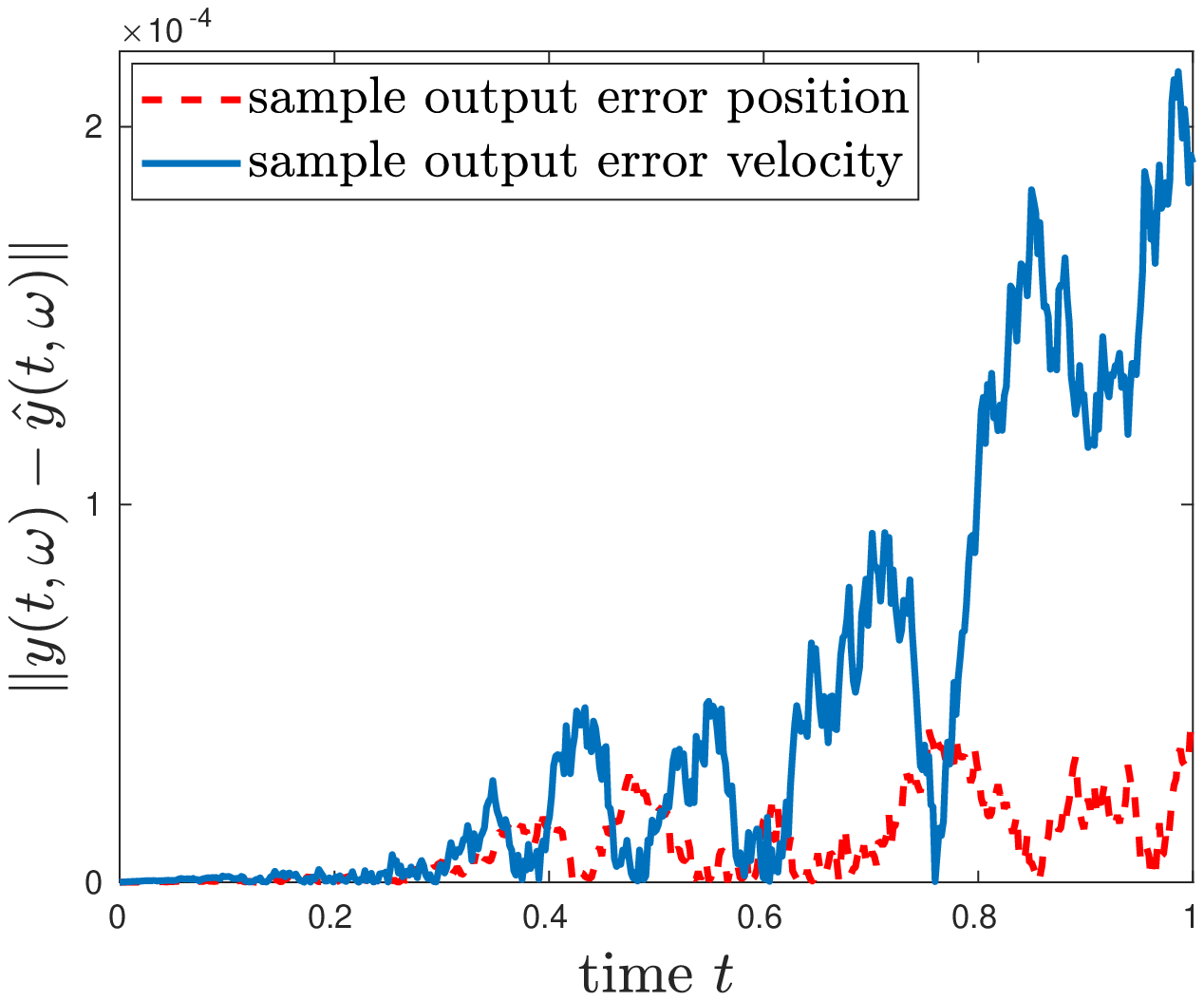}
\caption{Trajectory errors between full model and ROM of size $r=6$.}
\label{fig:samplemulterror}
\end{minipage}
\end{figure}
\begin{figure}[ht]
\begin{minipage}[b]{0.45\linewidth}
\centering
\includegraphics[width=\textwidth, height = 5cm]{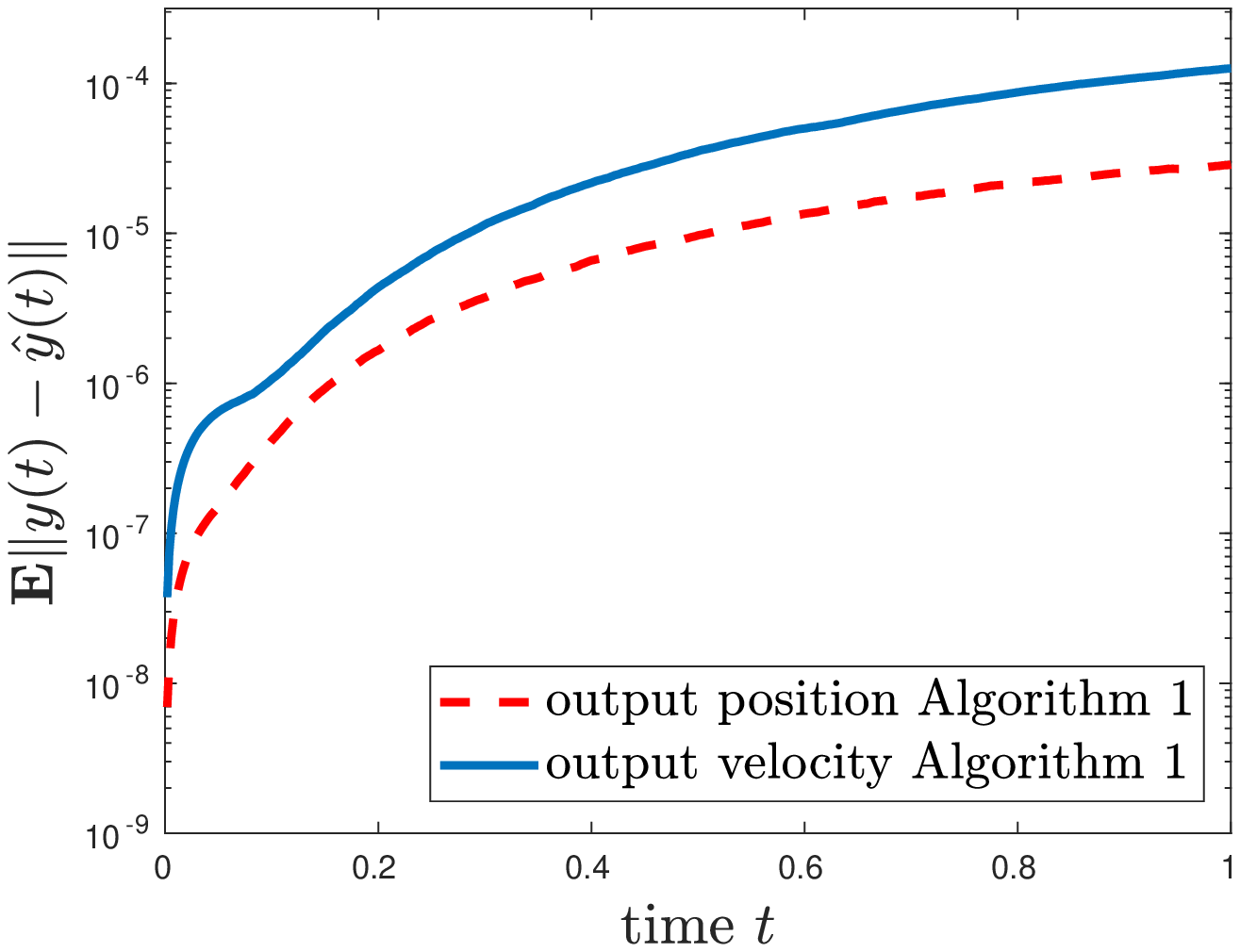}
\caption{Mean errors for position and velocity for $r=6$.}
\label{fig:C1C2expect}
\end{minipage}
\hspace{0.5cm}
\begin{minipage}[b]{0.45\linewidth}
\centering
\includegraphics[width=\textwidth,height = 5cm]{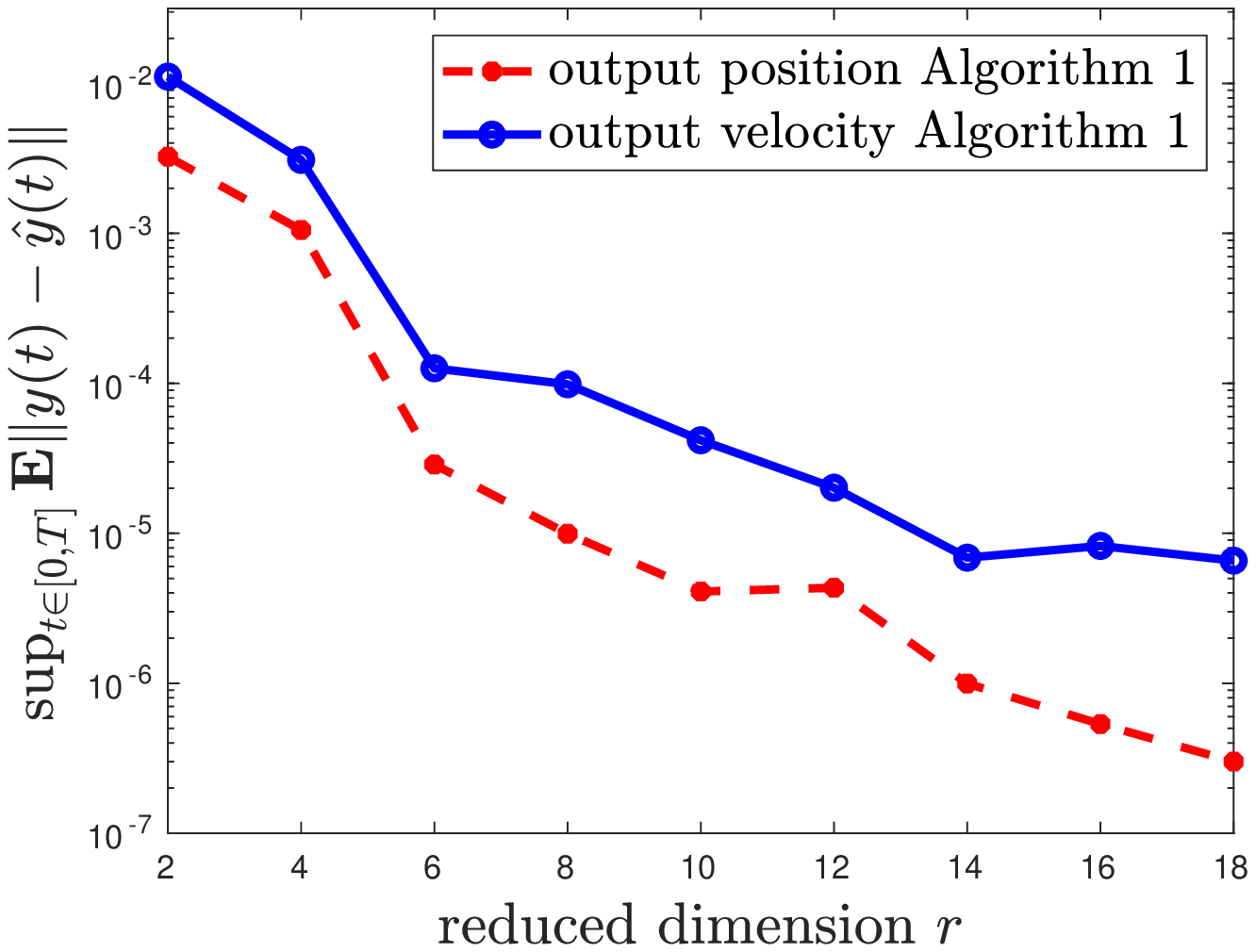}
\caption{Worst case mean error for several dimensions of the ROM.}
\label{fig:samplemulterror_log}
\end{minipage}
\end{figure}
Figure \ref{fig:samplemult} shows the output trajectory, i.e., the position and the velocity in the middle of the string for one particular sample over the time window $[0,1]$. The trajectory of the 
velocity is as rough as the noise process, whereas the trajectory for the position is smooth as it is the integral of the velocity. Figure \ref{fig:samplemulterror} plots the respective point-wise errors 
 between the full model of dimension $n=1000$ and ROM of dimension $r=6$ for fixed trajectories. In Figure \ref{fig:C1C2expect} the expected value of the output error of the full and the reduced model
 is plotted. Both in Figure \ref{fig:samplemulterror} for the sample error and in Figure \ref{fig:C1C2expect} for the mean error we observe that the output error is smaller for the position than for the velocity as this
 is a smoother function. 

Finally, we use Algorithm \ref{algo:MBIRKA} in order to compute several ROMs of dimensions $r=2,\ldots,18$ and the corresponding worst case error 
$\sup_{t \in [0, T]} \mathbb E \left\|y(t) - \hat y(t) \right\|$, which we plotted in Figure \ref{fig:samplemulterror_log}. We observe that the error decreases as the size of the ROM increases, as one would expect. We also see that the output error in the position is consistently about one magnitude smaller than the output error in the velocity.

\paragraph{Additive noise} For the additive case in \eqref{example_SDE} we use $\alpha=0.1$, $f_1(z)=\cos(2z)$, $f_{2,1}(z) = \sin(z)$ and 
$f_{2,2}(z) = \sin(z)\exp(-(z-\pi/2)^2)$. As input function we take $u \equiv 1$, such that $\|u\|_{L_T^2}=1$. For systems with additive noise \eqref{stochstate_add} we compare the two approaches considered in this
paper for computing a ROM. In this example we only consider the position for the output. Qualitatively we obtain the same results for the velocity, the error is typically larger by one magnitude, 
as we have seen for the case of multiplicative noise. 
\begin{figure}[ht]
\begin{minipage}[b]{0.45\linewidth} 
\centering
\includegraphics[width=\textwidth, height = 5cm]{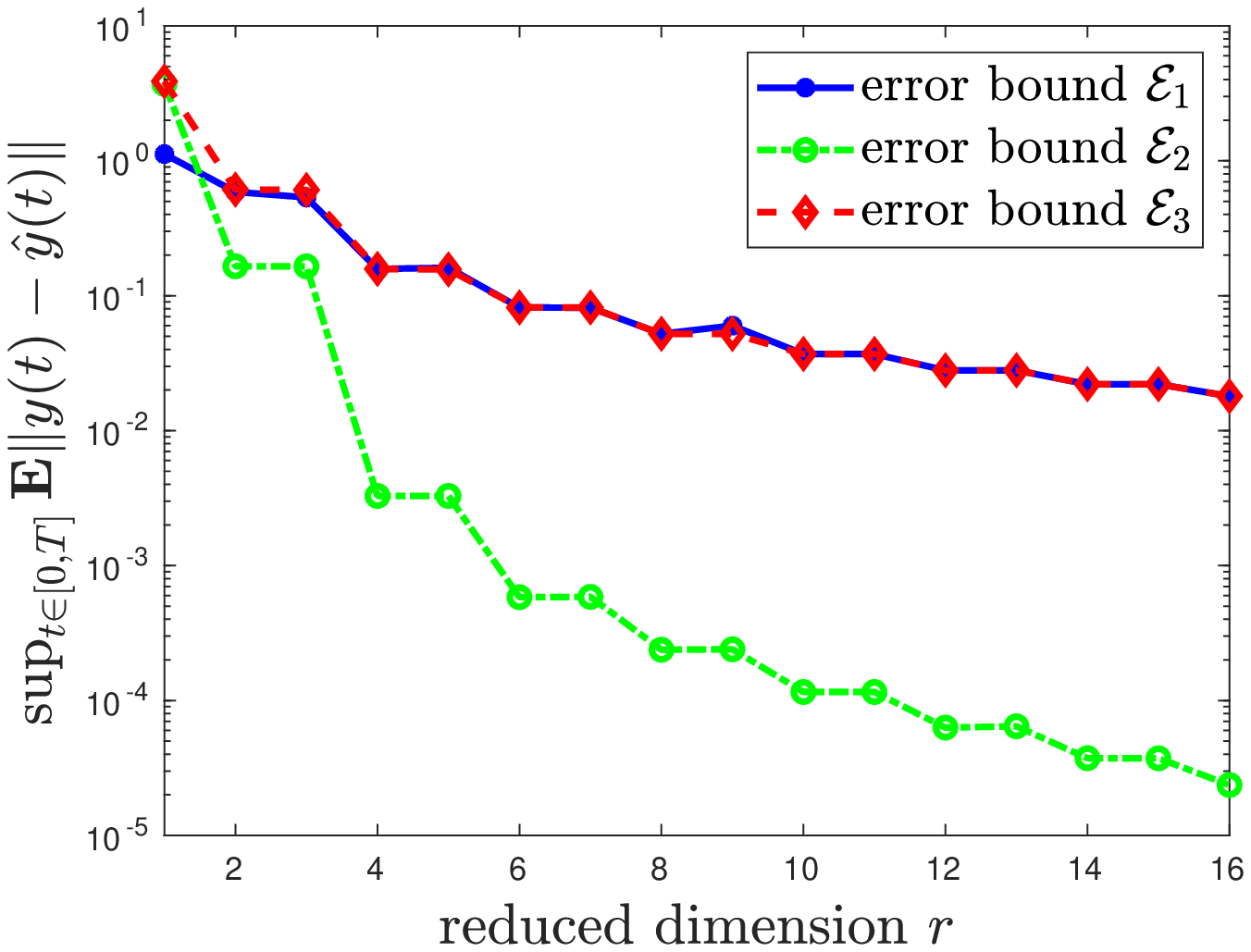}
\caption{Bounds $\mathcal{E}_1$, $\mathcal{E}_2$ and $\mathcal{E}_3$ from \eqref{error_est_add} and \eqref{one_step_bound} for different dimensions of the ROM.}
\label{fig:errorbound_add}
\end{minipage}
\hspace{0.5cm}
\begin{minipage}[b]{0.45\linewidth}
\centering
\includegraphics[width=\textwidth,height = 5cm]{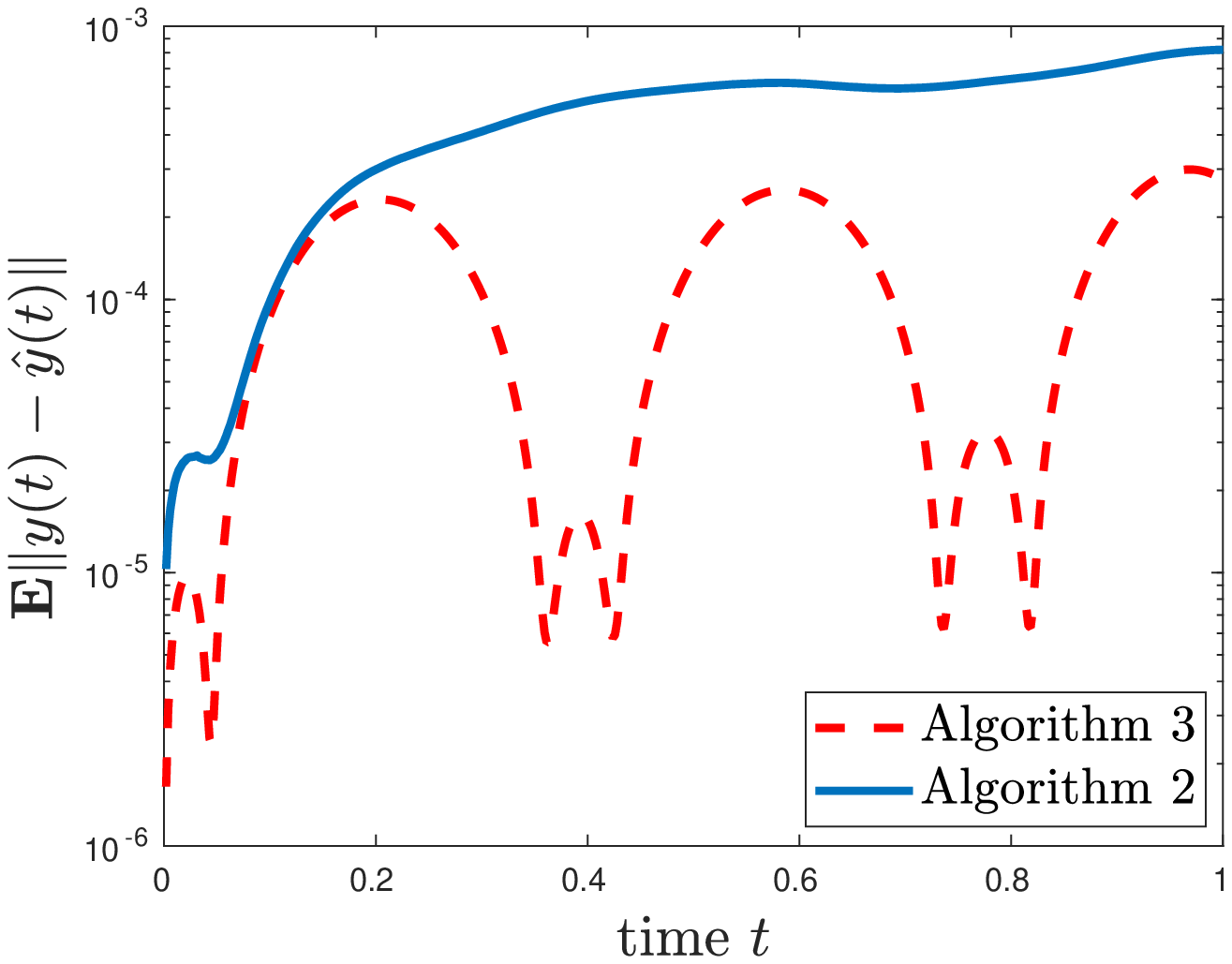}
\caption{Mean errors for the output when using the one step or two step modified linear IRKA.}
\label{fig:error_add_log}
\end{minipage}
\end{figure}
First, we use Theorems \ref{thm_stoch_h2_rep} and \ref{thm_opt_cond} (a) in order to compute the stochastic $\mathcal H_2$-distance $\left\| H - \hat H\right\|^2_{\mathcal L^2(\mathcal W)} = \trace(C P C^T) - \trace(\hat C \hat P {\hat C}^T)$, between a full order model and a ROM for several dimensions of the reduced system, after computing an optimal reduced system of dimensions $r=1,\ldots, 16$.
This allows us to compute $\mathcal{E}_1$, $\mathcal{E}_2$ and $\mathcal{E}_3$ from \eqref{error_est_add} and \eqref{one_step_bound}, which are plotted in Figure \ref{fig:errorbound_add}. 
Clearly, all errors decrease with increasing size of the ROM. However, we observe that the error $\mathcal{E}_2$ decreases much more rapidly than the errors $\mathcal{E}_1$ and $\mathcal{E}_3$, 
which behave similarly. Hence, if we would like to produce a ROM which has an error of at most $2e-02$, we can deduce from Figure \ref{fig:errorbound_add} that we need $r=16$ for the first reduced model
in \eqref{red_system_add_subsystem}, $r = 4$ in the second reduced model in \eqref{red_system_add_subsystem} and $r = 16$ for the reduced model \eqref{red_system_add}. One can see that there are potential savings by using a smaller reduced dimension
and still obtain sufficiently small  errors. For this particular case we computed the ROMs both for Algorithm \ref{algo:IRKA2} with  $r = 16$ and Algorithm \ref{algo:IRKA} with $r=16$ for the first reduced model and 
$r=4$ for the second one. The results are shown in Figure \ref{fig:error_add_log}. We observe that the worst case mean errors of one step and two step modified linear IRKA are of size $2.99e-04$ and $8.20e-04$, respectively.
The errors are of the same order, despite reducing the subsystem corresponding to the stochastic part to a smaller size.

\section{Conclusions}

We have derived optimization based model order reduction methods for stochastic systems. In particular we explained the link between the output error for stochastic systems, both with additive and multiplicative noise, and modified versions of the $\mathcal H_2$-norm for both linear and bilinear deterministic systems. We then developed optimality conditions for minimizing the error bounds computing reduced order models for stochastic systems. This approach revealed that modified versions of iterative rational Krylov methods are in fact natural schemes for reducing large scale stochastic systems with both additive and multiplicative noise.  In addition, we have introduced a splitting method for linear systems with additive noise, where the deterministic and the noise part are treated independently. This is advantageous if one of the systems can be reduced easier than the other. It also allows for a different model order reduction method in one of the systems, which we did not discuss in this paper.

\appendix

\section{Ito product rule}\label{appendixito}
In this section, we formulate an Ito product rule for semimartingales on a complete filtered 
probability space $\left(\Omega, \mathcal F, (\mathcal F_t)_{t\geq 0}, \mathbb P\right)$. Semimartingales $Z$ are stochastic processes that are 
c\`adl\`ag (right-continuous with exiting left limits) that have the representation \begin{align*}
                       Z(t) = M(t) + A(t),\quad t\geq 0,                                                              
                                                                                    \end{align*}
where $M$ is a c\`adl\`ag martingale with respect to $(\mathcal F_t)_{t\geq 0}$ and $A$ is a c\`adl\`ag process with bounded variation.\smallskip

Now, let $Z_1, Z_2$ be scalar semimartingales with jumps $\Delta Z_i(s):=Z_i(s)-Z_i(s-)$ ($i=1, 2$). Then, the Ito product formula 
is given as follows:
\begin{align}\label{profriot}
Z_1(t) Z_2(t)=Z_1(0) Z_2(0)+\int_0^t Z_1(s-)dZ_2(s)+\int_0^t Z_2(s-)dZ_1(s)+[Z_1, Z_2]_t
\end{align}
for $t\geq 0$, see \cite{semimartingalesmichel}. By \cite[Theorem 4.52]{limittheorems}, the compensator 
process $[Z_1, Z_2]$ is given by \begin{align}\label{decomqucov}
[Z_1, Z_2]_t=\left\langle M_1^c, M_2^c\right\rangle_t+\sum_{0\leq s\leq t} \Delta Z_1(s) \Delta Z_2(s)
\end{align}
for $t\geq 0$, where $M_1^c$ and $M_2^c$ are the square integrable continuous martingale parts of $Z_1$ and $Z_2$ (cf. \cite[Theorem 4.18]{limittheorems}).
The process $\left\langle M_1^c, M_2^c\right\rangle$ is the uniquely defined angle bracket process that guarantees that $M_1^c M_2^c-\left\langle M_1^c, M_2^c\right\rangle$ is an $(\mathcal F_t)_{t\geq 0}$- 
martingale, see \cite[Proposition 17.2]{semimartingalesmichel}. As a consequence of (\ref{profriot}), we obtain the following product rule in the vector valued case.
\begin{lem}\label{iotprodformelmatpro}
Let $Y$ be an $\mathbb R^d$-valued and $Z$ be an $\mathbb R^n$-valued semimartingale, then we have\begin{align*}
Y(t) Z^T(t)=Y(0) Z^T(0)+\int_0^t dY(s) Z^T(s-) +\int_0^t Y(s-) dZ^T(s)+\left([Y^{(i)},Z^{(j)}]_t\right)_{{i=1, \ldots, d}\atop {j=1, \ldots, n}}
\end{align*}
for all $t\geq 0$, where $Y^{(i)}$ and $Z^{(j)}$ are the $i$th and $j$th components of $Y$ and $Z$, respectively.
\end{lem}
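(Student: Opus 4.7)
The statement is a matrix-valued extension of the scalar Ito product formula \eqref{profriot}, so the plan is to reduce it entirywise to the scalar case. First I would write out the $(i,j)$-entry of the outer product: by definition of matrix multiplication, the $(i,j)$-component of $Y(t) Z^T(t)$ is the scalar product $Y^{(i)}(t) Z^{(j)}(t)$. Since each coordinate of a vector-valued semimartingale is itself a scalar semimartingale (the c\`adl\`ag martingale and bounded-variation parts decompose componentwise), I can apply \eqref{profriot} to each pair $(Y^{(i)},Z^{(j)})$.

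Next I would collect the resulting terms into matrix form. Applying \eqref{profriot} componentwise yields
\begin{align*}
Y^{(i)}(t)Z^{(j)}(t) = Y^{(i)}(0)Z^{(j)}(0) + \int_0^t Z^{(j)}(s-)\,dY^{(i)}(s) + \int_0^t Y^{(i)}(s-)\,dZ^{(j)}(s) + [Y^{(i)},Z^{(j)}]_t.
\end{align*}
The first term on the right is the $(i,j)$-entry of $Y(0)Z^T(0)$. The integral $\int_0^t Z^{(j)}(s-)\,dY^{(i)}(s)$ is, by the definition of the matrix-valued stochastic integral entrywise, the $(i,j)$-entry of $\int_0^t dY(s)\,Z^T(s-)$; similarly the third integral is the $(i,j)$-entry of $\int_0^t Y(s-)\,dZ^T(s)$. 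The bracket term is by definition the $(i,j)$-entry of the matrix $\bigl([Y^{(i)},Z^{(j)}]_t\bigr)_{i,j}$.

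Finally I would assemble all $d \times n$ such scalar identities into a single matrix equation, which is exactly the claimed formula. The only subtlety worth mentioning explicitly is justifying that the matrix-valued integrals appearing in the statement are well-defined for general semimartingale integrators $Y, Z$ and c\`adl\`ag predictable integrands $Z(s-), Y(s-)$; this is standard and follows because each coordinate integrator $Y^{(i)}$, $Z^{(j)}$ is a scalar semimartingale and each coordinate integrand is locally bounded, so the scalar stochastic integrals exist, and the matrix integral is then defined entrywise.

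I do not expect a significant obstacle: once the statement is read entrywise, it is a direct corollary of \eqref{profriot}. The only care needed is bookkeeping of indices (recognizing that transposing $Z$ swaps the roles of rows and columns, so that $Z^T(s-)$ multiplies $dY(s)$ on the right whereas $dZ^T(s)$ multiplies $Y(s-)$ on the right) and the observation that the bracket process of two scalar semimartingales is bilinear in its arguments, which justifies organizing the $d n$ bracket processes into a single $d \times n$ matrix.
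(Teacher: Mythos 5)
Your proposal is correct and follows exactly the route the paper intends: the lemma is stated there as an immediate consequence of the scalar formula \gref{profriot}, with no further proof given, and your entrywise reduction with the index bookkeeping for $Z^T$ is precisely the omitted argument. Nothing is missing.
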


\section{Proof of Lemma \ref{lemdgl}}\label{proof_prop_fund}

In order to ease notation, we prove the result for $s=0$. Let us assume that $L$ and $\hat L$ have $m$ columns denoted by $l_k$, $\hat l_k$, such that we can decompose $L=\left[l_1, \ldots, l_m\right]$ and
$\hat L=\left[\hat l_1, \ldots, \hat l_m\right]$. we obtain \begin{align}\label{decodaob}
\Phi(t) L \hat L^T\hat {\Phi}^T(t)=\sum_{k=1}^m Y_k(t) Z_k^T(t), 
\end{align}
where we set $Y_k(t) = \Phi(t) l_k$ and $Z_k(t) = \hat \Phi(t) \hat l_k$. We apply Corollary \ref{iotprodformelmatpro} to every summand of \eqref{decodaob}. This yields \begin{align}\label{proditoapplied}
Y_k(t) Z_k^T(t)= l_k \hat l_k^T+\int_0^t d(Y_k(s)) Z_k^T(s-)+\int_0^t Y_k(s-) dZ_k^T(s) +\left(\left[e_{i_1}^T Y_k, e_{i_2}^T Z_k\right]_t\right)_{{i_1=1, \ldots, n}\atop {i_2=1, \ldots, r}},
\end{align}
where $e_{i_1}$ and $e_{i_2}$ are unit vectors of suitable dimension. We determine the expected value of the compensator process of $e_{i_1}^T Y_k$ and $e_{i_2}^T Z_k$. Using \eqref{decomqucov}, it can be seen that this process
only depends on the jumps and the continuous martingale parts of $Y_k$ and $Z_k$. Taking \eqref{funddef} and the corresponding equation for the fundamental solution of the reduced system into account, we see that 
\begin{align*}
 \mathcal M_k(t):=\sum_{i=1}^{m_2}\int_0^t N_i Y_k(s) dM_i(s),\quad \hat{\mathcal M}_k(t):=\sum_{i=1}^{m_2}\int_0^t \hat N_i Z_k(s) dM_i(s)
\end{align*}
are the martingale parts of $Y_k$ and $Z_k$ which furthermore contain all the jumps of these processes. This gives 
\begin{align*}
\left[e_{i_1}^T Y_k, e_{i_2}^T Z_k\right]_t = \left[e_{i_1}^T \mathcal M_k, e_{i_2}^T \hat{\mathcal M}_k\right]_t.
\end{align*}
We apply Corollary \ref{iotprodformelmatpro} to ${\mathcal M}_k\hat{\mathcal M}_k^T$ and obtain 
\begin{align*}
{\mathcal M}_k(t) \hat{\mathcal M}_k^T(t)= \int_0^t d({\mathcal M}_k(s)) \hat{\mathcal M}_k^T(s-)+\int_0^t {\mathcal M}_k(s-) d\hat{\mathcal M}_k^T(s) +\left(\left[e_{i_1}^T {\mathcal M}_k, e_{i_2}^T \hat{\mathcal M}_k\right]_t\right)_{{i_1=1, \ldots, n}\atop {i_2=1, \ldots, r}}.
\end{align*}
Since ${\mathcal M}_k$ and $\hat{\mathcal M}_k$ are mean zero martingales \cite{zabczyk}, the above integrals with respect to these processes have mean zero as well \cite{kuo}. Hence, we have 
\begin{align*}
\mathbb E[{\mathcal M}_k(t) \hat{\mathcal M}_k^T(t)]= \mathbb E \left(\left[e_{i_1}^T {\mathcal M}_k, e_{i_2}^T \hat{\mathcal M}_k\right]_t\right)_{{i_1=1, \ldots, n}\atop {i_2=1, \ldots, r}}=
\mathbb E \left(\left[e_{i_1}^T Y_k, e_{i_2}^T Z_k\right]_t\right)_{{i_1=1, \ldots, n}\atop {i_2=1, \ldots, r}}.
\end{align*}
We apply the expected value to both sides of \eqref{proditoapplied} leading to 
\begin{align*}
\mathbb E [Y_k(t) Z_k^T(t)]= l_k \hat l_k^T+\mathbb E \left[\int_0^t d(Y_k(s)) Z_k^T(s-)\right]+\mathbb E \left[\int_0^t Y_k(s-) dZ_k^T(s)\right] + \mathbb E[{\mathcal M}_k(t) \hat{\mathcal M}_k^T(t)].
\end{align*}
We insert $dY_k$ and $dZ_k$ (given through \eqref{funddef}) into the above equation and exploit that an Ito integral has mean zero, and obtain
\begin{align*}
\mathbb E [Y_k(t) Z_k^T(t)]= l_k \hat l_k^T+\int_0^t A\mathbb E \left[Y_k(s) Z_k^T(s)\right]ds+\int_0^t \mathbb E \left[Y_k(s) Z_k^T(s)\right] \hat A^T ds+ \mathbb E[{\mathcal M}_k(t) \hat{\mathcal M}_k^T(t)].
\end{align*}
Notice that we replaced the left limits by the function values above and hence changed the integrand only on Lebesgue zero sets since the processes have only countably many jumps on bounded time intervals \cite{applebaum}.
 The Ito isometry \cite{zabczyk} now yields \begin{align*}
 \mathbb E[{\mathcal M}_k(t) \hat{\mathcal M}_k^T(t)] = \sum_{i, j = 1}^{m_2}\int_0^t N_i\mathbb E[ Y_k(s)Z_k^T(s)] \hat N_j^T k_{ij} ds.
\end{align*}
Combining this result with \eqref{decodaob} proves the claim of this lemma.

\section{Proof of Theorem \ref{thm:error_opt_cond}}\label{proof_opt_cond}

We only show the result for the optimality condition (c) in \eqref{optcond}.  All the other optimality conditions (a), (b) and (d) are derived similarly. We first reformulate optimality condition (c). Defining 
$\hat\Psi_i := \sum_{j=1}^{m_2} {\hat N}_{j} k_{ij}$ and $\Psi_i := \sum_{j=1}^{m_2} {N}_{j} k_{ij}$ for $i=1,\ldots, m_2$, we have that \begin{align*}
&\hat Q\hat \Psi_i\hat  P = Q_{2} \Psi_i P_{2} \Leftrightarrow (S^{-T}\hat Q)\hat \Psi_i(\hat P S^T) = (S^{-T}Q_{2}) \Psi_i (P_{2} S^T)\\
&\Leftrightarrow \trace\left((S^{-T}\hat Q)\hat \Psi_i(\hat P S^T) e_m e_k^T\right) = \trace\left((S^{-T}Q_{2}) \Psi_i (P_{2} S^T) e_m e_k^T\right)\quad\forall k, m = 1, \ldots, r,
\end{align*}
where $S$ is the factor of the spectral decomposition of $\hat A$. Using the relation between the trace and the vectorization $\vect(\cdot)$ of a matrix as well as the Kronecker product $\otimes$ of two matrices, we obtain an equivalent formulation of the optimality condition (c) in \eqref{optcond}, i.e. $\hat Q\hat \Psi_i\hat  P = Q_{2} \Psi_i P_{2}$ is equivalent to 
\begin{align}\label{equi_opt}
\Leftrightarrow \vect^T(\hat Q S^{-1}) (e_k e_m^T \otimes \hat \Psi_i) \vect(\hat P S^T) = \vect^T(Q_{2}^T S^{-1}) (e_k e_m^T \otimes \Psi_i) \vect(P_{2} S^T)
\end{align}
for all $k, m = 1, \ldots, r$. We now use Algorithm~\ref{algo:MBIRKA} to show that this equality holds.
We multiply \eqref{red_gram} with $S^T$ and \eqref{red_gram_obs} with $S^{-1}$ from the right to find the equations for $\hat P S^T$ and $\hat Q S^{-1}$. Subsequently, we vectorize these equations and get 
\begin{align}\label{firsteq}
   \vect(\hat Q S^{-1}) &= -\hat{\mathcal K}^{-T} \vect(\hat C^T \tilde C)\quad \text{and}\\
  \vect(\hat P S^T) &= -\hat{\mathcal K}^{-1} \vect(\hat B_1 \mathcal W (\tilde B_1\mathcal W)^T),                  
\end{align}
where $\hat {\mathcal K}:= (I\otimes \hat A)+(D\otimes I)+ \sum_{i, j=1}^{m_2} (\tilde N_i\otimes \hat N_j) k_{ij}$ and recalling that $D = S\hat A S^{-1},~\tilde B_1 = S\hat B_1, ~\tilde C = \hat C S^{-1}, ~\tilde N_i = S\hat N_i S^{-1}$.

With ${\mathcal K}:= (I\otimes A)+(D\otimes I)+ \sum_{i, j=1}^{m_2} (\tilde N_i\otimes N_j) k_{ij}$, and the definition of the reduced matrices 
$\hat A = (W^T V)^{-1}W^T AV$, $\hat B_1 = (W^T V)^{-1}W^T B_1$, $\hat C = CV$ and  $\hat N_i = (W^T V)^{-1}W^T N_i V$ in Algorithm~\ref{algo:MBIRKA} we furthermore have that 
\begin{align*}
  - \vect(\hat C^T \tilde C) &= -\vect(V^T C^T \tilde C) = -(I\otimes V^T) \vect(C^T \tilde C) = (I\otimes V^T) \mathcal K^T \vect(W) \\
  &= (I\otimes V^T) \mathcal K^T \vect(W(V^T W)^{-1} V^T W)\\
 & = (I\otimes V^T) \mathcal K^T (I\otimes W(V^T W)^{-1} V^T)  \vect (W) = \hat{\mathcal K}^T (I\otimes V^T)  \vect (W)                       
                          \end{align*}
and 
\begin{align*}
  - \vect(\hat B_1 \mathcal W (\tilde B_1\mathcal W)^T) &= -\vect((W^T V)^{-1} W^T B_1 \mathcal W (\tilde B_1\mathcal W)^T) \\
  &= -(I\otimes (W^T V)^{-1} W^T) \vect(B_1 \mathcal W (\tilde B_1\mathcal W)^T) \\
  &= (I\otimes (W^T V)^{-1} W^T) \mathcal K \vect(V) \\
  &= (I\otimes (W^T V)^{-1} W^T) \mathcal K \vect(V(W^TV)^{-1}W^TV) \\
 & = (I\otimes (W^T V)^{-1} W^T) \mathcal K (I\otimes V(W^TV)^{-1}W^T)  \vect (V)\\
 &= \hat{\mathcal K} (I\otimes (W^T V)^{-1} W^T)  \vect (V)                       
\end{align*}
We insert both results into \eqref{firsteq} and obtain expressions for $ \vect(\hat Q S^{-1})$ and  $\vect(\hat P S^T)$ in terms of the projection matrices from Algorithm~\ref{algo:MBIRKA}:
\begin{align*}
   \vect(\hat Q S^{-1}) = (I\otimes V^T)  \vect (W) \quad \text{and}\quad  \vect(\hat P S^T) = (I\otimes (W^T V)^{-1} W^T)  \vect (V).                  
\end{align*}
Hence, the left hand side of the optimality condition \eqref{equi_opt} can be written as
\begin{align*}
&\vect^T(\hat Q S^{-1}) (e_k e_m^T \otimes \hat \Psi_i) \vect(\hat P S^T) \\
&= \vect^T(W) (I\otimes V) (e_k e_m^T \otimes \hat \Psi_i) (I\otimes (W^T V)^{-1} W^T)  \vect (V)\\
&= \vect^T(W) (I\otimes V(W^T V)^{-1} W^T)) (e_k e_m^T \otimes \Psi_i) (I\otimes V(W^T V)^{-1} W^T)  \vect (V)\\
&= \vect^T(W)  (e_k e_m^T \otimes \Psi_i)  \vect (V),
\end{align*}
where we have used properties of the Kronecker product again. It remains to show that  
$P_{2} S^T = V$ and $Q_{2}^T S^{-1} = W$ for the optimality condition to hold. This is obtained by multiplying \eqref{mixed_gram} with $S^T$ from the right and \eqref{mixed_gram_obs} with $S^{-T}$ from the left. Hence \eqref{equi_opt} holds which concludes the proof.

\bibliographystyle{plain}

\end{document}